\newtheorem{theorem}{Theorem}
\newtheorem{lemma}[theorem]{Lemma}
\newtheorem{proposition}[theorem]{Proposition}
\newtheorem{remark}{Remark}[section]
\newcommand{\dd}{\mathrm{d}}
\numberwithin{equation}{section}
\begin{document}
	\title[Existence, regularity, asymptotic decay and radiality]{Existence, regularity, asymptotic decay and radiality of solutions to some extension problems}
	
	\author{H. Bueno}
	\address{H. Bueno and Aldo H. S. Medeiros - Departmento de Matem\'atica, Universidade Federal de  Minas Gerais, 31270-901 - Belo Horizonte - MG, Brazil}
	\email{hamilton.pb@gmail.com and  aldomedeiros@ufmg.br}
	\author{Aldo H. S. Medeiros}
	\author{G. A. Pereira}
	\address{G. A. Pereira - Departmento de Matem\'atica, Universidade Federal de Juiz de Fora, 36036-330 - Juiz de Fora - MG, Brazil}
	\email{gilberto.pereira@ufop.edu.br}

	\subjclass{35J20, 35Q55, 35B65,  35R11} \keywords{Variational methods, regularity of solutions, exponential decay, fractional Laplacian, Hartree equations}
	\thanks{H. Bueno is the corresponding author; Aldo H. S. Medeiros received research grants from CNPq/Brazil.}
	\date{}

\begin{abstract}
Supposing only that $\displaystyle\lim_{t \to 0} \frac{f(t)}{t} = 0$ and $\displaystyle\lim_{t \to \infty} \frac{f(t)}{t^{p}} = 0$, for some $p \in \left(1,\frac{N+1}{N-1}\right)$, we prove that solutions to the extension problem 
\begin{equation*}\left\{
\begin{array}{rcll}
-\Delta u+ m^2u &=& 0, &\mbox{in} \ \ \mathbb{R}^{N+1}_{+} \\ 
-\frac{\partial u}{\partial{x}} (0,y)& =& f(u(0,y)), & y \in \mathbb{R}^{N},
\end{array}\right.
\end{equation*}
and also to the extension Hartree problem
\begin{equation*}
\left\{\begin{aligned}
-\Delta u +m^2u&=0, &&\mbox{in} \ \mathbb{R}^{N+1}_+,\\
-\displaystyle\frac{\partial u}{\partial x}(0,y)&=-V_\infty u(0,y)+\left(\frac{1}{|y|^{N-\alpha}}*F(u(0,y))\right)f(u(0,y)) &&\mbox{in} \ \mathbb{R}^{N}\end{aligned}\right.
\end{equation*}
are radially symmetric in $\mathbb{R}^N$. In the last problem, $V_\infty>0$ is a constant and $F$ the primitive of $f$. Under the same hypotheses, regularity and exponential decay of solutions to the first problem is also proved and, supposing the traditional Ambrosetti-Rabinowitz condition, also existence of a ground state solution.
\end{abstract}
\maketitle

\section{Introduction}\label{intro}
We will denote the point of $\mathbb{R}^{N+1}_{+}$ by pair $(x,y)$, where $x \in (0,\infty)$ and $y=(y_1,...,y_N) \in \mathbb{R}^{N}$.

In this paper we recall various aspects of the extension problem 
\begin{equation}\label{P}\left\{
\begin{array}{rcll}
 -\Delta u+ m^2u &=& 0, &\mbox{in} \ \ \mathbb{R}^{N+1}_{+} \\ 
 -\frac{\partial u}{\partial{x}} (0,y)& =& f(u(0,y)), & y \in \mathbb{R}^{N}.
\end{array}\right.
\end{equation}
Our main result consists in proving that solutions of \eqref{P} are radially symmetric in $\mathbb{R}^N$ with respect to a point $y_0\in\mathbb{R}^N$. We also prove the same result for the solutions to the pseudo-relativistic Hartree problem
\begin{equation}\label{Hartree}
\!\!\!\left\{\begin{aligned}
-\Delta u +m^2u&=0, &&\mbox{in} \ \mathbb{R}^{N+1}_+,\\
-\displaystyle\frac{\partial u}{\partial x}(0,y)&=-V_\infty u(0,y)+\left(\frac{1}{|y|^{N-\alpha}}*F(u(0,y))\right)f(u(0,y)) &&\mbox{in} \ \mathbb{R}^{N}\end{aligned}\right.
\end{equation}
(where $V_\infty>0$ is a constant and $F$ is the primitive of $f$). 

Our hypotheses on $f$ are very mild. We suppose that the $C^1$-nonlinearity $f$ satisfies
\begin{enumerate}
	\item[$(f_1)$] $\displaystyle\lim_{t \to 0} \frac{f(t)}{t} = 0$;
	\item[$(f_2)$] $\displaystyle\lim_{t \to \infty} \frac{f(t)}{t^{p}} = 0$, for some $p \in \left(1,\frac{N+1}{N-1}\right)$.
\end{enumerate}

Under these hypotheses we also show that solutions of \eqref{P} are regular and have exponential decay. Although not new, we understand that the review of these results might be helpful. Consonant with this proposal, the proofs we present in this article are very detailed.

We also address a simple situation of existence of solutions to problem \eqref{P}, supposing additionally that $f$ satisfies
\begin{enumerate}
\item[$(f_3)$] There exist $\theta > 2$ such that
\[0<\theta F(t) < tf(t), \quad \forall t>0,
\]
where $F(t) = \displaystyle\int_{0}^{t}f(s) \dd s$.
\end{enumerate}
Of course, solutions of \eqref{P} can be obtained under much milder assumptions, see e.g., \cite{BMedP,BBMP}. Because we are looking for a positive solution, we suppose that $f(t)=0$ for $t<0$.

Of course, problem \eqref{P} results from the application of the Dirichlet to Neumann operator to the problem
\begin{equation}\label{original}\sqrt{-\Delta+m^2}\, u=f(u)\quad\text{in }\ \mathbb{R}^N,\end{equation}
while \eqref{Hartree} comes from
\begin{equation}\label{originalHartree}\sqrt{-\Delta+m^2}\, u+V_\infty u=\left(\frac{1}{|y|^{N-\alpha}}*F(u)\right)f(u)\ \ \text{in }\ \mathbb{R}^{N}.\end{equation}
Problems \eqref{original} and \eqref{originalHartree} have been extensively studied in recent years, see \cite{Cho,Cingolani,Cingolani2,CSS,ZelatiNolasco,ZelatiNolasco2,Elgart,Frohlich,Lenzmann,Moroz0,Moroz,Wei}. See also the classical paper of Lieb \cite{Lieb2}.

Changing the operator $\sqrt{-\Delta+m^2}$ to its generalization $(-\Delta+m^2)^\sigma$, $0<\sigma<1$, problems like \eqref{original} and \eqref{originalHartree} can be found in \cite{Ambrosio,BMP,VFelli}.
We summarize our results:
\begin{theorem}\label{t1}
Suppose that conditions \textup{($f_1$)-($f_3$)}, are valid. Then, problem \eqref{P} has a non-negative ground state solution $w\in H^1(\mathbb{R}^{N+1}_+)$.
\end{theorem}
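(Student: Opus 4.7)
The plan is to apply the Mountain Pass Theorem to the energy functional
\begin{equation*}
I(u) = \frac{1}{2}\int_{\mathbb{R}^{N+1}_+}(|\nabla u|^2 + m^2 u^2)\,\dd x\,\dd y - \int_{\mathbb{R}^N}F(u(0,y))\,\dd y,
\end{equation*}
defined on $H^1(\mathbb{R}^{N+1}_+)$. The trace operator $u\mapsto u(0,\cdot)$ maps $H^1(\mathbb{R}^{N+1}_+)$ continuously into $H^{1/2}(\mathbb{R}^N)$ and hence into $L^q(\mathbb{R}^N)$ for every $q\in[2,2N/(N-1)]$. Hypotheses $(f_1)$ and $(f_2)$ yield the standard bounds $|f(t)|\leq \varepsilon|t|+C_\varepsilon|t|^p$ and $|F(t)|\leq (\varepsilon/2)t^2+C_\varepsilon|t|^{p+1}/(p+1)$; since $p+1<2N/(N-1)$ by $(f_2)$, the functional $I$ is well defined, of class $C^1$, and its critical points are weak solutions of \eqref{P}.

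Next I would verify the mountain-pass geometry. The growth bounds give $I(u)\geq \tfrac{1}{2}\|u\|^2 - \varepsilon C\|u\|^2 - C'\|u\|^{p+1}$, so choosing $\varepsilon$ small there exist $\rho,\alpha>0$ with $I(u)\geq\alpha$ whenever $\|u\|=\rho$. The Ambrosetti--Rabinowitz condition $(f_3)$ implies $F(t)\geq c\,t^\theta$ for $t\geq 1$ with $\theta>2$, hence $I(tu_0)\to-\infty$ as $t\to\infty$ for any fixed nontrivial nonnegative $u_0$. The mountain pass theorem without Palais--Smale then produces a sequence $(u_n)$ with $I(u_n)\to c_{MP}>0$ and $I'(u_n)\to 0$, and the usual combination $I(u_n)-\tfrac{1}{\theta}I'(u_n)u_n$ together with $(f_3)$ yields boundedness of $(u_n)$ in $H^1(\mathbb{R}^{N+1}_+)$.

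The main obstacle is the loss of compactness due to translation invariance in $y$. To overcome it, I would invoke a vanishing alternative: if
\begin{equation*}
\lim_{n\to\infty}\sup_{y\in\mathbb{R}^N}\int_{B_1(y)}|u_n(0,z)|^2\,\dd z=0,
\end{equation*}
a Lions-type lemma for $H^{1/2}(\mathbb{R}^N)$, applied to the traces, forces $u_n(0,\cdot)\to 0$ in $L^{p+1}(\mathbb{R}^N)$, whence $\int_{\mathbb{R}^N}f(u_n(0,y))u_n(0,y)\,\dd y\to 0$; combined with $I'(u_n)u_n\to 0$ this would yield $\|u_n\|\to 0$, contradicting $I(u_n)\to c_{MP}>0$. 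Hence there exist $\delta>0$ and points $y_n\in\mathbb{R}^N$ such that the translates $\tilde u_n(x,y):=u_n(x,y+y_n)$ satisfy $\int_{B_1(0)}|\tilde u_n(0,z)|^2\,\dd z\geq\delta$; by translation invariance $(\tilde u_n)$ is still a bounded Palais--Smale sequence at level $c_{MP}$, so up to subsequence $\tilde u_n\rightharpoonup w\not\equiv 0$ in $H^1(\mathbb{R}^{N+1}_+)$ with $I'(w)=0$. Testing with $w^-$ and using $f(t)=0$ for $t<0$ gives $w\geq 0$.

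Finally, to identify $w$ as a \emph{ground state}, I would pass to the Nehari manifold $\mathcal{N}=\{u\neq 0 : I'(u)u=0\}$. Under $(f_1)$--$(f_3)$ the standard fibering argument shows that $c_{MP}=\inf_{\mathcal{N}}I$ and that every nontrivial critical point lies in $\mathcal{N}$. Since $w\in\mathcal{N}$ and, using $(f_3)$ together with weak lower semicontinuity and Fatou's lemma,
\begin{equation*}
I(w)=I(w)-\tfrac{1}{\theta}I'(w)w\leq\liminf_n\left[I(\tilde u_n)-\tfrac{1}{\theta}I'(\tilde u_n)\tilde u_n\right]=c_{MP},
\end{equation*}
we conclude $I(w)=\inf_{\mathcal{N}}I$, so $w$ is the desired nonnegative ground state. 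The delicate step is the concentration-compactness argument in the third paragraph, as the classical vanishing lemma for $H^{1/2}(\mathbb{R}^N)$ must be transported to the extension setting via the trace.
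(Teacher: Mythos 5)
Your argument is essentially the paper's own: mountain-pass geometry from $(f_1)$--$(f_3)$, boundedness of the Palais--Smale sequence via $J(u_n)-\frac{1}{\theta}J'(u_n)\cdot u_n$, the Lions vanishing alternative applied to the traces $\gamma(u_n)$ (which are bounded in $H^{1/2}(\mathbb{R}^N)$, so the fractional vanishing lemma applies exactly as you indicate), translation by $y_n$ to get a nontrivial weak limit $w$ with $J'(w)=0$, and nonnegativity by testing with $w^-$; all of these steps are correct and match the paper. The only place you diverge is the final identification of $w$ as a ground state via the Nehari manifold: the assertion that ``the standard fibering argument shows $c_{MP}=\inf_{\mathcal N}I$'' is not justified under $(f_1)$--$(f_3)$ alone, since without a monotonicity hypothesis on $t\mapsto f(t)/t$ the fiber map $t\mapsto I(tu)$ need not attain its maximum at $t=1$ for $u\in\mathcal N$, so the inequality $c_{MP}\le\inf_{\mathcal N}I$ (which is what your Fatou estimate $I(w)\le c_{MP}$ must be combined with) does not follow by the standard argument. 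To be fair, the paper is no more careful: it invokes the characterization \eqref{charac} citing Rabinowitz and never verifies minimality of $J(w)$ among nontrivial critical points either, so your proposal is on par with (indeed slightly more explicit than) the published proof on this point; a watertight ground-state claim would require either adding monotonicity of $f(t)/t$ or running a separate minimization over the set of nontrivial critical points, whose energies are bounded below by a positive constant thanks to $(f_1)$--$(f_2)$ and the trace inequality.
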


\begin{theorem}\label{classical}
	Assuming \textup{($f_1$)-($f_2$)}, any solution $v$ of problem \eqref{P} satisfies
	\[v\in C^{1,\alpha}(\mathbb{R}^{N+1}_+)\cap C^2(\mathbb{R}^{N+1}_+)\]
	and therefore is a classical solution of \eqref{P}.
\end{theorem}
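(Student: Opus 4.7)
The plan is to combine interior smoothness, which is immediate from classical elliptic theory, with a bootstrap scheme on the boundary to reach $C^{1,\alpha}$ up to $\{x=0\}$. In the open half-space $v$ satisfies the linear elliptic equation $-\Delta v+m^2v=0$ with constant coefficients and vanishing right-hand side, so classical interior $L^2$ regularity (difference quotients) combined with Sobolev embedding yields $v\in C^\infty(\mathbb{R}^{N+1}_+)$. This already covers the $C^2$ assertion, and the remainder of the work is genuinely about regularity up to the boundary.

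For the boundary I would run the standard bootstrap in two cycles. First, upgrade the trace from $H^{1/2}$ to $L^\infty$. By the trace theorem $v(0,\cdot)\in H^{1/2}(\mathbb{R}^N)\hookrightarrow L^{2N/(N-1)}(\mathbb{R}^N)$, and $(f_1),(f_2)$ yield $|f(t)|\le\varepsilon|t|+C_\varepsilon|t|^p$ with $p+1<2N/(N-1)$, so the boundary nonlinearity is strictly subcritical with respect to the trace embedding. A Moser/Brezis--Kato-type iteration, testing the weak form of \eqref{P} with truncated powers of $v$ of the form $v\,\min\{|v|^{2\beta},M\}$ and using the bulk Sobolev inequality to upgrade $\|v\|_{L^q(\mathbb{R}^{N+1}_+)}$ followed by the trace inequality to upgrade $\|v(0,\cdot)\|_{L^{q'}(\mathbb{R}^N)}$, gives, after iteration, $v(0,\cdot)\in L^q(\mathbb{R}^N)$ for every $q<\infty$ and finally $v(0,\cdot)\in L^\infty(\mathbb{R}^N)$.

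Once $v(0,\cdot)$ is bounded, $g:=f(v(0,\cdot))\in L^\infty(\mathbb{R}^N)$, and De~Giorgi--Nash--Moser estimates for the half-space Neumann problem with bounded datum yield $v\in C^{0,\beta}_{\mathrm{loc}}(\overline{\mathbb{R}^{N+1}_+})$ for some $\beta\in(0,1)$. Since $f\in C^1$, this gives $g\in C^{0,\beta}_{\mathrm{loc}}(\mathbb{R}^N)$, and Schauder estimates for the oblique/Neumann boundary value problem then upgrade this to $v\in C^{1,\alpha}_{\mathrm{loc}}(\overline{\mathbb{R}^{N+1}_+})$, which together with the interior $C^\infty$ information completes the proof. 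The main obstacle is the $L^\infty$ step: unlike the classical Brezis--Kato argument, the nonlinearity lives only on the boundary, so each iteration step must pass through the fractional trace inequality, and the strict subcriticality $p<(N+1)/(N-1)$ must be quantitatively exploited to ensure that the Moser chain gains a definite amount of integrability at each step and does not stall at a finite exponent.
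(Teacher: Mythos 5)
Your argument is correct, and its hard nonlinear step coincides with the paper's: the Moser/Brezis--Kato iteration with truncated powers of $v$, pushed through the trace inequality $|\gamma(u)|_{2^\#}\le C\|u\|$ and exploiting the strict subcriticality $p+1<\tfrac{2N}{N-1}$, is exactly what the paper does in Propositions \ref{p1} and \ref{t2} to get $\gamma(v)\in L^q(\mathbb{R}^N)$ for all $q\le\infty$ and $v\in L^\infty(\mathbb{R}^{N+1}_+)$. Where you diverge is the linear step that converts boundedness of $h=f(\gamma(v))$ into H\"older and then $C^{1,\alpha}$ regularity up to $\{x=0\}$: you invoke De~Giorgi--Nash--Moser theory for the half-space Neumann problem followed by Schauder estimates for the oblique/co-normal boundary condition, whereas the paper (following Coti Zelati--Nolasco, Proposition \ref{regZN}) introduces the antiderivative $\rho(x,y)=\int_0^x v(t,y)\,\mathrm{d}t$ and odd reflection of $\rho$ and $h$, so that $-\Delta\rho+m^2\rho=h$ holds in all of $\mathbb{R}^{N+1}$; then only interior Calder\'on--Zygmund/Sobolev and Schauder theory is needed, giving $\rho\in C^{1,\alpha}$, hence $v=\partial_x\rho\in C^\alpha$ up to the boundary, and, once $h\in C^\alpha$ because $f\in C^1$ and $v$ is bounded, $\rho\in C^{2}$ and $v\in C^{1,\alpha}(\mathbb{R}^{N+1}_+)\cap C^2(\mathbb{R}^{N+1}_+)$. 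The trade-off is that the paper's reduction avoids quoting boundary regularity theory for Neumann problems at the price of an auxiliary function and a reflection argument, while your route is conceptually more direct but rests on the (standard, yet heavier) boundary versions of De~Giorgi--Nash--Moser and Schauder estimates; both are legitimate, and your interior-smoothness observation disposes of the $C^2$ claim just as the paper does.
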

As a simple remark, we observe that if $f\in C^\infty$, then the solution is also $C^\infty$.

We also prove that the ground state solution has exponential decay:
\begin{theorem}\label{t3} Suppose that $v$ is a weak solution to \eqref{P}.
	
Then $v(x,y) > 0$ in $[0,\infty) \times\mathbb{R}^{N}$ and, for any $0\leq \alpha < m$, there exists $C>0$ such that
\[0 < v(x,y) \leq Ce^{-(m-\alpha)\sqrt{x^2 + \vert y \vert^{2}}}e^{-\alpha x}
\]
for any $(x,y) \in \mathbb{R}^{N+1}_{+}$.
	
In particular, there exists $\delta \in (0,m)$ such that
\[0 < v(0,y) \leq Ce^{-\delta \vert y \vert}, \,\, \text{for any} \,\, y \in \mathbb{R}^{N}.
\] 
\end{theorem}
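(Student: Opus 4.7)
The plan is to compare $v$ with an explicit exponential supersolution, after disposing of positivity and decay of the trace at infinity. Fix the parameter $\alpha\in[0,m)$ and set
\[
\Psi_\alpha(x,y):=e^{-\alpha x}\,e^{-(m-\alpha)r},\qquad r:=\sqrt{x^2+|y|^2}.
\]
Positivity comes first: since $f(t)=0$ for $t<0$, testing the weak formulation against $v^-\in H^1(\mathbb{R}^{N+1}_+)$ annihilates the boundary contribution and forces $v^-\equiv 0$, so $v\geq 0$. The strong maximum principle applied to $-\Delta v+m^2 v=0$ then yields $v>0$ in the open half-space, and Hopf's lemma rules out any boundary zero, since $v(0,y_0)=0$ would yield $-\partial_x v(0,y_0)>0$, contradicting $-\partial_x v(0,y_0)=f(0)=0$.

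The next step is to show $v(0,y)\to 0$ as $|y|\to\infty$. By Theorem \ref{classical}, $v$ is classical, and a Moser iteration on the trace equation driven by $(f_2)$ promotes the trace $v(0,\cdot)\in H^{1/2}(\mathbb{R}^N)$ to $L^\infty$; interpolation with the uniform H\"older estimate then yields pointwise vanishing of $v(0,y)$ at infinity, and the Poisson representation for $-\Delta+m^2$ on the half-space extends this to $v(x,y)\to 0$ as $r\to\infty$. Hypothesis $(f_1)$ then supplies, for any prescribed $\epsilon>0$, a radius $R>0$ with $f(v(0,y))\leq\epsilon\,v(0,y)$ whenever $|y|\geq R$.

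The heart of the argument is the barrier. A direct computation gives
\[
-\Delta\Psi_\alpha+m^2\Psi_\alpha=(m-\alpha)\,\Psi_\alpha\,\frac{N+2\alpha(r-x)}{r}\geq 0\ \ \text{in }\mathbb{R}^{N+1}_+,\qquad -\partial_x\Psi_\alpha(0,y)=\alpha\,\Psi_\alpha(0,y),
\]
so $\Psi_\alpha$ is an interior supersolution satisfying a linear Robin-type condition of weight $\alpha$. Assuming $\alpha>0$, pick $\epsilon<\alpha$ and choose $C>0$ large enough that $v\leq C\Psi_\alpha$ on the cap $\{r=R,\,x\geq 0\}$. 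The function $\phi:=v-C\Psi_\alpha$ then satisfies $-\Delta\phi+m^2\phi\leq 0$ on $\Omega_R:=\{r>R,\,x\geq 0\}$, with $\phi\leq 0$ on $\{r=R\}$, $\phi\to 0$ at infinity, and $-\partial_x\phi\leq\epsilon v-C\alpha\,\Psi_\alpha$ on $\{x=0,\,|y|>R\}$. Substituting $\phi=e^{-\alpha x}\psi$ neutralizes the Robin weight: the interior inequality becomes $-\Delta\psi+2\alpha\,\partial_x\psi+(m^2-\alpha^2)\psi\leq 0$ with a positive zero-order coefficient (since $\alpha<m$), while the boundary condition simplifies to $\partial_x\psi\geq 0$ on $\{x=0,\,|y|>R\}$. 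A standard maximum principle together with Hopf's lemma in this setting forces $\psi\leq 0$, hence $\phi\leq 0$, i.e.\ $v\leq C\Psi_\alpha$ throughout $\Omega_R$, and after enlarging $C$ throughout $\overline{\mathbb{R}^{N+1}_+}$. Setting $x=0$ yields $v(0,y)\leq Ce^{-(m-\alpha)|y|}$, i.e.\ $\delta=m-\alpha$; the borderline case $\alpha=0$ is recovered by letting $\alpha\downarrow 0$ in the estimate.

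I expect the main obstacle to be the vanishing of the trace at infinity: membership in $H^1$ alone does not give pointwise decay, so one genuinely needs the $L^\infty$ bound via Moser iteration (powered by $(f_2)$) together with the H\"older regularity from Theorem \ref{classical}. Once that is in place, the supersolution computation is routine, and the Robin-weighted exterior maximum principle reduces, after the substitution $\phi=e^{-\alpha x}\psi$ that neutralizes the Robin coefficient, to a standard Hopf/maximum-principle argument.
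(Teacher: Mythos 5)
Your proposal is correct and follows essentially the same route as the paper: the same barrier $e^{-\alpha x}e^{-(m-\alpha)\sqrt{x^2+|y|^2}}$, the same preliminaries (positivity via the strong maximum principle and Hopf's lemma; decay of the trace from $\gamma(v)\in L^p$ for all $p$, regularity, and the Poisson/Fourier representation), and the same exponentially weighted comparison --- your substitution $\phi=e^{-\alpha x}\psi$ plays exactly the role of the paper's auxiliary function $W=we^{\lambda x}$, with your $\epsilon<\alpha$ replacing the paper's choice $0<\lambda<\alpha$. The one caveat is your closing remark that the case $\alpha=0$ ``is recovered by letting $\alpha\downarrow 0$'': this is not justified as stated, since the constant $C$ (through the radius $R$ and the choice of $C$ on $\{r=R\}$) depends on $\alpha$; note, though, that the paper's own proof likewise requires $0<\lambda<\alpha$, so it shares this borderline limitation, and the ``in particular'' conclusion with some $\delta\in(0,m)$ only needs $\alpha>0$.
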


\begin{theorem}\label{t4}
Any solution $v$ to problem \eqref{P} is radially symmetric on $\mathbb{R}^N$ with respect to some $y_0\in\mathbb{R}^N$. 
\end{theorem}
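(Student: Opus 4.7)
My plan is to prove Theorem \ref{t4} by the method of moving planes applied in the $y$-variables of the extension problem \eqref{P}. Fix an arbitrary direction, without loss of generality $e_1 \in \mathbb{R}^N$, and for $\lambda \in \mathbb{R}$ set
\begin{equation*}
\Sigma_\lambda = \{(x,y) \in \mathbb{R}^{N+1}_+ : y_1 > \lambda\}, \qquad T_\lambda = \{(x,y) \in \mathbb{R}^{N+1}_+ : y_1 = \lambda\},
\end{equation*}
and define the reflected function $v^\lambda(x,y) := v(x, 2\lambda - y_1, y_2, \ldots, y_N)$ together with $w_\lambda := v^\lambda - v$. Since $-\Delta + m^2$ commutes with this reflection, $w_\lambda$ satisfies $-\Delta w_\lambda + m^2 w_\lambda = 0$ in $\Sigma_\lambda$ and $w_\lambda = 0$ on $T_\lambda$. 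On the flat boundary $\{x=0\}\cap \Sigma_\lambda$, the mean value theorem applied to $f \in C^1$ (together with the $L^\infty$ bound on $v$ from Theorem \ref{classical}) linearises the Neumann condition to $-\partial_x w_\lambda(0,y) = c_\lambda(y)\, w_\lambda(0,y)$, where $c_\lambda(y) = \int_0^1 f'\bigl(v(0,y) + t\, w_\lambda(0,y)\bigr)\, \dd t$ is uniformly bounded.

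The first step is to start the procedure at $\lambda \gg 1$. Testing the equation against $w_\lambda^- := \max\{-w_\lambda, 0\}$, which lies in $H^1(\Sigma_\lambda)$ by Theorem \ref{t3}, integrating by parts, and using the linearised Neumann condition yields
\begin{equation*}
\int_{\Sigma_\lambda} \bigl(|\nabla w_\lambda^-|^2 + m^2 (w_\lambda^-)^2\bigr)\, \dd V \;=\; \int_{\{x=0\}\cap \Sigma_\lambda} c_\lambda(y)\, (w_\lambda^-)^2 \, \dd y.
\end{equation*}
From $(f_1)$ and $f\in C^1$ one has $f'(0)=0$, so the exponential decay of $v(0,\cdot)$ from Theorem \ref{t3} forces $\|c_\lambda\|_{L^\infty(\{x=0\}\cap\Sigma_\lambda)} \to 0$ as $\lambda \to +\infty$. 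The elementary trace inequality $\|u(0,\cdot)\|_{L^2(\mathbb{R}^N)}^2 \leq \|u\|_{H^1(\mathbb{R}^{N+1}_+)}^2$ then absorbs the boundary integral into the left-hand side, giving $w_\lambda^- \equiv 0$ (i.e.\ $w_\lambda \geq 0$ in $\Sigma_\lambda$) for every $\lambda \geq \bar\lambda$, with $\bar\lambda$ sufficiently large.

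The second step slides the plane down to the critical value $\lambda_0 := \inf\{\lambda : w_\mu \geq 0 \text{ in } \Sigma_\mu \text{ for all } \mu \geq \lambda\}$. By continuity $w_{\lambda_0} \geq 0$, and the strong maximum principle for $-\Delta + m^2$ in $\Sigma_{\lambda_0}$ combined with Hopf's lemma at $\{x=0\}\cap \Sigma_{\lambda_0}$ (where the boundary coefficient $c_{\lambda_0}$ is bounded) gives the dichotomy $w_{\lambda_0} \equiv 0$ or $w_{\lambda_0} > 0$ strictly in the interior of $\Sigma_{\lambda_0}$. The second alternative is excluded by a standard splitting argument: on a large cylinder $B_R(0) \cap \Sigma_{\lambda_0}$ strict positivity and continuity in $\lambda$ keep $w_{\lambda_0-\varepsilon} \geq 0$ for small $\varepsilon>0$, while outside $B_R$ the smallness estimate of the previous paragraph applies (since $c_\lambda$ remains uniformly small there). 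This contradicts the minimality of $\lambda_0$, forcing $w_{\lambda_0} \equiv 0$, hence symmetry of $v$ across $T_{\lambda_0}$. Performing the argument for every direction $e \in S^{N-1}$ and intersecting the resulting symmetry hyperplanes identifies a unique $y_0 \in \mathbb{R}^N$ about which $v(x,\cdot)$ is radially symmetric for every $x \geq 0$.

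The principal technical obstacle is the nonlinear Neumann condition on $\{x=0\}$: the smallness-at-infinity estimate and the strong-maximum/Hopf dichotomy must be synchronised so that the interior elliptic structure cooperates with the boundary behaviour. This is made possible by the exponential decay in Theorem \ref{t3} (which makes $c_\lambda$ vanish at infinity through $f'(0)=0$) and the $C^1$ regularity of $f$ (which produces a linear boundary condition with bounded coefficient amenable to Hopf's lemma at the critical position).
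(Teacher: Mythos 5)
Your overall strategy coincides with the paper's: moving planes in the $y$-variables, a linearized Neumann coefficient that becomes small far out thanks to the exponential decay of Theorem \ref{t3} and $f'(0)=0$, absorption of the boundary trace through the trace inequality of Lemma \ref{lemmameas}, and a dichotomy at the critical plane via the strong maximum principle and Hopf's lemma. Two points of imprecision are worth recording (the first is shared with the paper): $\|c_\lambda\|_{L^\infty(\{x=0\}\cap\Sigma_\lambda)}$ does \emph{not} tend to $0$ on all of $\{y_1>\lambda\}$, because the reflected value $v^\lambda(0,y)$ need not be small; what is true, and what the argument actually uses, is that $c_\lambda$ is small on the support of $w_\lambda^-$, where $0\le v^\lambda(0,y)\le v(0,y)$ and $v(0,y)$ decays. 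Also, since you define $\lambda_0$ as an infimum over all real $\lambda$, you should add the one-line verification that $\lambda_0>-\infty$ (if $w_\lambda\ge 0$ for every $\lambda$, let $\lambda\to-\infty$ at a fixed point and use the decay of $v$ against $v>0$); the paper avoids this by working with $\nu\ge 0$ from the right, $\nu'\le 0$ from the left, and treating $\nu=\nu'=0$ separately.

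The genuine gap is at the critical position. You exclude the alternative $w_{\lambda_0}>0$ by asserting that on $B_R(0)\cap\Sigma_{\lambda_0}$ ``strict positivity and continuity in $\lambda$ keep $w_{\lambda_0-\varepsilon}\ge 0$''. As stated this step fails: $w_{\lambda_0}$ vanishes identically on the reflection hyperplane $\{y_1=\lambda_0\}$, so there is no uniform positive lower bound on the part of $B_R$ adjacent to that plane, and continuity in $\lambda$ gives no sign information for $w_{\lambda_0-\varepsilon}$ in that strip. This is precisely the delicate point of the method, and the paper resolves it by a different device: for a sequence $\lambda_j\uparrow\nu$ it returns to the integral identity \eqref{rlambdasigmalambda}, splits the boundary integral into the part outside a large ball (where the coefficient is small and is absorbed via Lemma \ref{lemmameas}) and the part inside the ball, and for the latter it uses that the measure of $E_j=\operatorname{supp}\,w^-_{\lambda_j}(0,\cdot)\cap B_{r_0}(p_j)$ tends to $0$ (because $w_\nu>0$ there), so that H\"older's inequality together with the trace embedding into $L^{2N/(N-1)}$ yields \eqref{Iden7}, which is again absorbed into the left-hand side. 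Your proof needs this small-measure/Sobolev absorption argument (or an explicit substitute, such as a narrow-band maximum principle near the plane) to justify passing from $\lambda_0$ to $\lambda_0-\varepsilon$; without it the contradiction with the minimality of $\lambda_0$ is not established. The final reduction from planar symmetry to radial symmetry, by running over all directions $e\in S^{N-1}$ and intersecting the symmetry hyperplanes, is fine and in fact stated more carefully than in the paper, which only treats the coordinate directions.
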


\begin{theorem}\label{t5}
Any solution $v$ to problem \eqref{Hartree} is radially symmetric on $\mathbb{R}^N$ with respect to some $y_0\in\mathbb{R}^N$.  
\end{theorem}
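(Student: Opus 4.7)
My plan is to adapt to \eqref{Hartree} the moving-planes argument already used for Theorem \ref{t4}, the only new feature being the nonlocal Choquard term in the boundary condition. Fix a direction, say $e_1$, and for $\lambda\in\mathbb{R}$ put
$\Sigma_\lambda=\{(x,y)\in\mathbb{R}^{N+1}_+:y_1>\lambda\}$,
$\Sigma^0_\lambda=\{y\in\mathbb{R}^N:y_1>\lambda\}$, $T_\lambda=\{y_1=\lambda\}$,
$y^\lambda=(2\lambda-y_1,y_2,\ldots,y_N)$, and $v_\lambda(x,y)=v(x,y^\lambda)$.
The difference $w_\lambda=v_\lambda-v$ satisfies
$-\Delta w_\lambda+m^2 w_\lambda=0$ in $\Sigma_\lambda$, vanishes on $T_\lambda$, and on $\Sigma^0_\lambda$ obeys
\[
-\partial_x w_\lambda(0,y)+V_\infty w_\lambda(0,y)=G_\lambda(y)f(v_\lambda(0,y))-G(y)f(v(0,y)),
\]
where $G(y)=(|y|^{\alpha-N}*F(v(0,\cdot)))(y)$ and $G_\lambda$ is the analogue for $v_\lambda$; the identity $G(y^\lambda)=G_\lambda(y)$, which follows from a change of variables $z\mapsto z^\lambda$, is what produces this clean form.

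The essential new ingredient is the Choquard splitting. A second change of variables on the half-space $\{z_1<\lambda\}$ yields
\[
G_\lambda(y)-G(y)=\int_{\Sigma^0_\lambda}\bigl[F(v_\lambda(0,z))-F(v(0,z))\bigr]\left(\frac{1}{|y-z|^{N-\alpha}}-\frac{1}{|y-z^\lambda|^{N-\alpha}}\right)\dd z.
\]
For $y,z\in\Sigma^0_\lambda$ one has $|y-z|\le|y-z^\lambda|$, so the kernel is nonnegative; since $F'=f\ge 0$, the integrand is sign-compatible with $w_\lambda(0,z)$. Hence, whenever $w_\lambda\ge 0$ on $\Sigma^0_\lambda$, we obtain $G_\lambda\ge G$ there. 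Splitting $G_\lambda f(v_\lambda)-Gf(v)=G_\lambda\bigl(f(v_\lambda)-f(v)\bigr)+(G_\lambda-G)f(v)$ and invoking the mean value theorem converts the boundary equality into the linear inequality $-\partial_x w_\lambda+V_\infty w_\lambda\ge c_\lambda(y)\,w_\lambda$ on $\Sigma^0_\lambda$, with $c_\lambda$ bounded in terms of $\|v\|_\infty$ and the subcriticality $(f_1)$--$(f_2)$.

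With this sign structure in hand, the moving-planes machinery of Theorem \ref{t4} runs essentially unchanged. First, exponential decay of $v(0,\cdot)$ (established exactly as in Theorem \ref{t3}, since along the boundary the right-hand side still behaves like $-V_\infty v+o(v)$ as $|y|\to\infty$, the convolution $G$ being bounded and $f(v)/v\to 0$) guarantees that $w_\lambda\ge 0$ in $\Sigma_\lambda$ for all sufficiently large $\lambda$, via the usual $w_\lambda^-$ argument. One then decreases $\lambda$ to $\lambda_0=\inf\{\lambda:w_\mu\ge 0\text{ in }\Sigma_\mu\ \forall\mu\ge\lambda\}$; by continuity $w_{\lambda_0}\ge 0$, and the strong maximum principle for $-\Delta+m^2$ together with Hopf's lemma applied on $T_{\lambda_0}$ and on the flat boundary $\{x=0\}$ (exploiting the linear Neumann inequality above) yields the standard dichotomy: either $w_{\lambda_0}\equiv 0$, giving reflective symmetry, or a strict improvement contradicts the minimality of $\lambda_0$. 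Rotating the direction produces radiality about some $y_0\in\mathbb{R}^N$. The main obstacle to be anticipated is the interaction between the two nonlocalities, the Choquard convolution and the Dirichlet-to-Neumann coupling: the split-convolution identity is the technical heart, converting the global comparison of $G_\lambda$ and $G$ into a pointwise inequality driven by the sign of $w_\lambda$ on $\Sigma^0_\lambda$, which is precisely what is needed to carry out the Hopf step on the flat boundary.
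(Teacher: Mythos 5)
Your skeleton is the same as the paper's: moving planes in the $y_1$-direction, the reflection identity $G(y^\lambda)=G_\lambda(y)$, the splitting $G_\lambda f(v_\lambda)-Gf(v)=G_\lambda\bigl(f(v_\lambda)-f(v)\bigr)+(G_\lambda-G)f(v)$, and the comparison of the kernels $|y-z|^{\alpha-N}\geq |y-z^\lambda|^{\alpha-N}$ for $y,z$ on the same side of the plane together with the monotonicity of $F$ (this is exactly the paper's estimate \eqref{final1}--\eqref{final2}, up to a choice of sign convention for $w_\lambda$). The genuine gap is in how you dispose of the nonlocal term. You claim that the boundary condition can be converted into a \emph{pointwise} linear inequality $-\partial_x w_\lambda+V_\infty w_\lambda\geq c_\lambda(y)\,w_\lambda$ with $c_\lambda$ bounded. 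But the sign information $G_\lambda\geq G$ is a consequence of $w_\lambda\geq 0$ on \emph{all} of $\Sigma^0_\lambda$; it is a global statement, not a pointwise one. In the two steps where the method actually has to work --- the initial step for $\lambda$ large and the contradiction argument at the critical plane along $\lambda_j\to\lambda_0$ --- the sign of $w_\lambda$ is precisely what is unknown, and on the set where $w_\lambda$ has the wrong sign the difference $G_\lambda(y)-G(y)$ is controlled only by a convolution of the wrong-signed part of $w_\lambda(0,\cdot)$ over the whole half-space against the Riesz kernel. That quantity cannot be written as $c_\lambda(y)w_\lambda(0,y)$, so no local maximum-principle or Hopf argument on $\{x=0\}$ applies directly, and "the usual $w_\lambda^{-}$ argument of Theorem \ref{t4}" does not run unchanged: in Theorem \ref{t4} the smallness of the coefficient comes from $f'(t)\to0$ as $t\to0$ on the support of the wrong-signed part, a mechanism that is unavailable for the convolution term.

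What is missing is the quantitative estimate that the paper supplies: after testing with the wrong-signed part of $w_\lambda$, the contribution of $(G_\lambda-G)f(v)$ is bounded through the generalized Hardy--Littlewood--Sobolev inequality (Proposition \ref{pLieb}) and H\"older, producing a factor of the form $|c^\lambda_2|_{L^\infty(\Sigma_\lambda)}\,|f(v(0,\cdot))|_{L^\delta(\Sigma_\lambda)}\to 0$ as $\lambda\to\infty$, which is then absorbed into the gradient term on the left via the trace inequality of Lemma \ref{lemmameas} (estimates \eqref{c2}--\eqref{c7}); at the critical plane the same integral estimate is combined with the exponential decay (to control the part of $\Sigma_{\lambda_j}$ outside a large ball) and the vanishing-measure argument as in \eqref{Iden7}. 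Your proposal identifies the split-convolution identity correctly, but without these integral (HLS-plus-trace) estimates neither the start of the moving planes nor the strict-improvement step at $\lambda_0$ can be closed, so the proof as written is incomplete.
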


The natural setting for problem \eqref{P} is the Sobolev space \[H^1(\mathbb{R}^{N+1}_+)=\left\{u\in L^2(\mathbb{R}^{N+1}_+)\,:\, \iint_{\mathbb{R}^{N+1}_+}|\nabla u|^2\dd x\dd y<\infty \right\}\]
considered with the norm
\[\|u\|^2=\iint_{\mathbb{R}^{N+1}_+}\left(|\nabla u|^2+u^2\right)\dd x\dd y.\]

\noindent\textbf{Notation.} The norm in the space $\mathbb{R}^{N+1}_+$ will be denoted by $\|\cdot\|$. For all $q\in [1,\infty]$, we denote by $|\cdot|_q$ the norm in the space $L^q(\mathbb{R}^{N})$ and by $\|\cdot\|_q$ the norm in the space $L^{q}(\mathbb{R}^{N+1}_+)$. From now on, integrals in $\mathbb{R}^{N+1}_+$ will be denoted without $\dd x\dd y$. \vspace*{.3cm}

Traces of functions $H^1(\mathbb{R}^{N+1}_+)$ are in  $H^{1/2}(\mathbb{R}^{N})$ and every function in $H^{1/2}(\mathbb{R}^{N})$ is the trace of a function in $H^1(\mathbb{R}^{N+1}_+)$, see \cite{Tartar}. Denoting $\gamma\colon H^1(\mathbb{R}^{N+1}_+)\to H^{1/2}(\mathbb{R}^{N})$ the linear function that associates the trace $\gamma(v)\in H^{1/2}(\mathbb{R}^{N})$ of the function $v\in H^1(\mathbb{R}^{N+1}_+)$, then $\ker\,\gamma=H^1_0(\mathbb{R}^{N+1}_+)$.

The immersions
\begin{align}\label{immersions}H^1(\mathbb{R}^{N+1}_+)&\hookrightarrow L^q(\mathbb{R}^{N+1}_+)\\
H^{1/2}(\mathbb{R}^{N})&\hookrightarrow L^q(\mathbb{R}^{N})\end{align}
are continuous for any $q\in [2,2^*]$ and $[2,2^{\#}]$ respectively, where
\begin{equation}\label{2*}2^{*}=\frac{2(N+1)}{N-1}\qquad\textrm{and}\qquad 2^{\#}=\frac{2N}{N-1}.\end{equation}

For a bounded open set $\Omega\subset\mathbb{R}^{N}$ we have (see \cite{Demengel}),  $H^{1/2}(\mathbb{R}^{N})=W^{1/2,2}(\mathbb{R}^{N})$. We recall the definition of and $W^{1/2,2}(\Omega)$. Let $u\colon \Omega\to \mathbb{R}$ a measurable function and $\Omega$ a bounded open set (that, in the sequel, we suppose to have Lipschitz boundary). Denoting
\[[u]^2_{\Omega}=\int_\Omega\int_\Omega\frac{|u(x)-u(y)|^2}{|x-y|^{N+1}}\dd x\dd y\]
and
\begin{align*}W^{1/2,2}(\Omega)&=\left\{u\in L^2(\mathbb{R}^{N})\,:\,[u]^2_{\Omega}<\infty\right\},
\end{align*}
then $W^{1/2,2}(\Omega)$ is a reflexive Banach space (see, e.g., \cite{Demengel} and \cite{Guide}) endowed with the norm
\[\|u\|_{W^{1/2,2}(\Omega)}=|u|_2+[u]_{\Omega}.\]\goodbreak

The proof of the next result can be found in \cite[Theorem 4.54]{Demengel}.
\begin{theorem}\label{immersionW}
	The immersion $W^{1/2,2}(\Omega)\hookrightarrow L^q(\Omega)$ is compact for any $q\in \left[1,2^{\#}\right)$.
\end{theorem}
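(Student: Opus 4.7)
The plan is to invoke the Fréchet--Kolmogorov--Riesz compactness criterion in $L^q$, with the key quantitative input being a control of $L^2$-translations by the $H^{1/2}$-norm. The first step will be to exploit the Lipschitz regularity of $\partial\Omega$ to construct a bounded extension operator $E\colon W^{1/2,2}(\Omega)\to H^{1/2}(\mathbb{R}^N)$ whose image consists of functions supported in a fixed bounded neighborhood $\widetilde{\Omega}\supset\Omega$. This reduces the problem to showing that a bounded sequence $(u_n)\subset H^{1/2}(\mathbb{R}^N)$ with supports contained in $\widetilde{\Omega}$ admits a subsequence convergent in $L^q(\Omega)$.

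The central estimate I would establish is
\[
\int_{\mathbb{R}^N}|u(y+h)-u(y)|^{2}\,\dd y \;\leq\; C\,|h|\,\|u\|_{H^{1/2}(\mathbb{R}^N)}^{2},
\]
obtained on the Fourier side from $|e^{\mathrm{i}h\cdot\xi}-1|^{2}\leq \min(4,|h\cdot\xi|^{2})\leq 2|h||\xi|$ applied to $\widehat{u(\cdot+h)-u}(\xi)=(e^{\mathrm{i}h\cdot\xi}-1)\hat u(\xi)$, together with the fact that $\|u\|_{H^{1/2}(\mathbb{R}^N)}^{2}$ is equivalent to $\int (1+|\xi|)|\hat u(\xi)|^{2}\,\dd\xi$ and the identity identifying the Gagliardo seminorm on $\mathbb{R}^N$ with $\int|\xi||\hat u(\xi)|^{2}\,\dd\xi$ up to a dimensional constant.

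From here I would deduce equicontinuity in $L^2$ along the sequence $(u_n)$; since all $u_n$ are supported in the fixed set $\widetilde{\Omega}$, Hölder's inequality upgrades this to equicontinuity in $L^q$ for every $q\in[1,2]$. For $q\in(2,2^{\#})$ I would interpolate against the continuous fractional Sobolev embedding $H^{1/2}(\mathbb{R}^N)\hookrightarrow L^{2^{\#}}(\mathbb{R}^N)$ (already invoked in the excerpt), writing $\tfrac{1}{q}=\tfrac{\theta}{2}+\tfrac{1-\theta}{2^{\#}}$ and applying Hölder to get
\[
\|u_n(\cdot+h)-u_n\|_{L^q}\;\leq\;\|u_n(\cdot+h)-u_n\|_{L^2}^{\theta}\,\|u_n(\cdot+h)-u_n\|_{L^{2^{\#}}}^{1-\theta}\longrightarrow 0
\]
uniformly in $n$ as $|h|\to 0$, since the second factor stays uniformly bounded. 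Combined with the uniform compact support and the uniform $L^q$ bound coming again from the Sobolev embedding, Fréchet--Kolmogorov--Riesz yields an $L^q(\mathbb{R}^N)$-convergent subsequence, which in particular converges in $L^q(\Omega)$.

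The main obstacle I anticipate is the construction of the extension operator $E$: unlike in integer-order Sobolev spaces, one cannot simply reflect across $\partial\Omega$ and multiply by a cutoff, because the Gagliardo seminorm is nonlocal and the double integrals straddling $\partial\Omega$ must be carefully controlled. The Lipschitz hypothesis on $\partial\Omega$ enters precisely here, allowing the boundary to be flattened via bi-Lipschitz charts and reducing the construction to the half-space case, where a reflection followed by a cutoff produces a bounded extension. Once $E$ is in place, the remainder of the argument is a fairly standard blend of Fréchet--Kolmogorov--Riesz compactness and Lebesgue interpolation.
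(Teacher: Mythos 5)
Your proposal is correct, but note that the paper itself offers no proof of Theorem \ref{immersionW}: it is simply quoted from \cite[Theorem 4.54]{Demengel}, so there is no internal argument to compare against. Your route is the classical self-contained one, and all the steps check out: the elementary bound $\min\bigl(4,|h\cdot\xi|^{2}\bigr)\leq 2|h||\xi|$ together with Plancherel and the Fourier characterization of the Gagliardo seminorm gives $\|u(\cdot+h)-u\|_{L^2}^{2}\leq C|h|\,\|u\|_{H^{1/2}(\mathbb{R}^N)}^{2}$; the interpolation exponent $\theta$ in $\frac{1}{q}=\frac{\theta}{2}+\frac{1-\theta}{2^{\#}}$ is strictly positive exactly because $q<2^{\#}$, which is where the strict inequality in the statement is used and why the endpoint fails; the uniform compact support of $Eu_n$ supplies the tightness needed for Fr\'echet--Kolmogorov--Riesz; and restricting an $L^q(\mathbb{R}^N)$-convergent subsequence to $\Omega$ finishes, since $Eu=u$ on $\Omega$. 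The one ingredient you treat as a black box, the bounded extension operator $W^{1/2,2}(\Omega)\to H^{1/2}(\mathbb{R}^N)$ for Lipschitz $\Omega$ with image supported in a fixed bounded neighborhood, is indeed the only place the boundary regularity enters; it is standard (see \cite{Guide} or \cite{Demengel}), and your sketch via bi-Lipschitz flattening, reflection in the half-space and a cutoff is the usual construction, so citing it there would be legitimate. Compared with the paper's bare citation, your argument has the merit of making explicit both the role of the Lipschitz hypothesis and the reason $q=2^{\#}$ must be excluded.
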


As usual, the immersion $W^{1/2,2}(\Omega)\hookrightarrow L^{2^{\#}}(\Omega)$ is continuous: see \cite[Corollary 4.53]{Demengel}. We denote the norm in the space $L^q(\Omega)$ by $|\cdot|_{L^q(\Omega)}$.

Problem \eqref{P} is related to the energy functional 
\[J(u) =\frac{1}{2}\iint_{\mathbb{R}^{N+1}_+}\left(|\nabla u|^2+m^2u^2\right)-\frac{1}{2}\int_{\mathbb{R}^{N}}F(\gamma(u))\]
and, since the derivative of the energy functional is given by
\begin{align}\label{derivative}
J'(u)\cdot \varphi =&\iint_{\mathbb{R}^{N+1}_+}\left[\nabla u\cdot\nabla \varphi+m^2u\varphi\right]-\int_{\mathbb{R}^{N}}f(\gamma(u))\gamma(\varphi)
\end{align}
for all $\varphi\in H^1(\mathbb{R}^{N+1}_+)$, we see that critical points of $J$ are weak solutions \eqref{P}.

\section{Preliminaries}
Let us suppose that  $u\in H^1(\mathbb{R}^{N+1})\cap C^\infty_0(\mathbb{R}^{N+1}_+)$ and $u(x,y)\geq 0$. Let us proceed heuristically: since
\[|u(0,y)|^t=\int_{\infty}^{0}\frac{\partial}{\partial x}|u(x,y)|^t\dd x=\int_{\infty}^{0}t|u(x,y)|^{t-2}u(x,y)\frac{\partial}{\partial x}u(x,y)\dd x,\]
it follows from Hölder's inequality
\begin{align}\label{Heu}\int_{\mathbb{R}^{N}}|\gamma(u)|^t=\int_{\mathbb{R}^{N}}|u(0,y)|^t\dd y&\leq \int_{\mathbb{R}^{N}}\int_0^\infty t|u(x,y)|^{t-1}|\nabla u(x,y)|\nonumber\\
&\leq t\left(\int_{\mathbb{R}^{N+1}_+}|u|^{2(t-1)}\right)^{1/2}\left(\int_{\mathbb{R}^{N+1}_+}|\nabla u|^2\right)^{1/2}\nonumber\\
&\leq t\|u\|_{2(t-1)}^{t-1}\|\nabla u\|_{2}.
\end{align}
So, in order to apply the immersion $H^1(\mathbb{R}^{N+1}_+)\hookrightarrow L^q(\mathbb{R}^{N+1}_+)$ we must have $2\leq 2(t-1)\leq \frac{2(N+1)}{N-1}$, that is, \begin{equation}\label{p}
2\leq t\leq\frac{2N}{N-1}=2^{\#}.
\end{equation}
By density of $H^1(\mathbb{R}^{N+1})\cap C^\infty_0(\mathbb{R}^{N+1}_+)$ in $H^1(\mathbb{R}^{N+1}_+)$, the estimate \eqref{Heu} is valid for all $u\in H^1(\mathbb{R}^{N+1}_+)$.

Taking into account \eqref{immersions}, Young's inequality applied to \eqref{Heu} yields
\begin{align}\label{casep}|\gamma(u)|_{t}&\leq \|u\|_{2(t-1)}^{(t-1)/t}\left(t\|\nabla u\|_{2}\right)^{1/t}\\
&\leq \frac{t-1}{t}\|u\|_{2(t-1)}+\|\nabla u\|_{2}\nonumber\\
&\leq C_t\|u\|,\nonumber
\end{align}
where $C_t$ is a constant. We summarize:
\begin{equation}\label{gammav}
|\gamma(u)|\in {L^t(\mathbb{R}^{N})},\ \ \forall\ t\in [2,2^{\#}].\end{equation}

The inequality \eqref{casep} will also be valuable in the special case $t=2$:
\begin{align}\label{p=2}
|\gamma(u)|^2_{2}&\leq \|u\|_{2}\left(2\|\nabla u\|_{2}\right)\nonumber\\
&\leq \lambda\iint_{\mathbb{R}^{N+1}_+}u^2+\frac{1}{\lambda}\iint_{\mathbb{R}^{N+1}_+}|\nabla u|^2
\end{align}
where $\lambda>0$ is a parameter, the last inequality being a consequence of Young's inequality.

\begin{remark}\label{obs1}
It follows from \textup{($f_1$)} and \textup{($f_2$)} that, for any fixed $\xi>0$, there exists a constant $C_\xi$ such that
\begin{equation}\label{boundf}|f(t)|\leq\xi t+C_\xi t^{p-1},\quad\forall\ t\geq 0\end{equation}
and analogously
\begin{equation}\label{boundF}|F(t)|\leq\xi t^2+C_\xi t^{p}\leq C(t^2+t^p),\quad\forall\ t\geq 0.\end{equation}

Condition $(f_3)$ also yields
\begin{equation}\label{Obs3}
F(t)  \geq C_{0}\vert t \vert^{\theta}, \,\,\, \forall t .
\end{equation}

Observe also that $\gamma(u)\in L^\theta(\mathbb{R}^{N})$ and $\gamma(u)\in L^2(\mathbb{R}^{N})$ imply $F(\gamma(u))\in L^1(\mathbb{R}^{N})$.
\end{remark}

We denote by $L^q_w(\mathbb{R}^{N})$ the weak $L^q(\mathbb{R}^{N})$ space and by $|\cdot|_{q_w}$ its usual norm (see \cite{Lieb}). The next result is a generalized version of the Hardy-Littlewood-Sobolev inequality and will be applied when considering the solutions of the Choquard equation \eqref{Hartree}:
\begin{proposition}[Lieb \cite{Lieb}]\label{pLieb} Assume that $p,q,r\in(1,\infty)$ and \[\frac{1}{p}+\frac{1}{q}+\frac{1}{r}=2.\]
	Then, for some constant $N_{p,q,t}>0$ and for any $f\in L^p(\mathbb{R}^{N})$, $g\in L^r(\mathbb{R}^{N})$ and
	$h\in L^q_w(\mathbb{R}^{N})$, we have the inequality
	\[\int_{\mathbb{R}^{N}}\int_{\mathbb{R}^{N}}f(t)h(t-s)g(s)\dd t\dd s\leq N_{p,q,t}|f|_{p}|g|_{r} |h|_{q_w}.\]
\end{proposition}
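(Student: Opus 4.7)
The plan is to reduce the statement to the classical Hardy--Littlewood--Sobolev (HLS) inequality for the Riesz kernel $h_0(x)=|x|^{-N/q}$, which is the prototypical element of $L^q_w(\mathbb{R}^N)\setminus L^q(\mathbb{R}^N)$. The two key ingredients I would use are the Riesz rearrangement inequality and a pointwise bound for the symmetric decreasing rearrangement of any function in $L^q_w(\mathbb{R}^N)$.

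First I would reduce to the radial, symmetric, decreasing case. Since only $|f|,|g|,|h|$ appear on the right-hand side, we may assume $f,g,h\ge 0$. Denoting by $f^*,g^*,h^*$ the symmetric decreasing rearrangements, the Riesz rearrangement inequality gives
\[
\int_{\mathbb{R}^{N}}\!\int_{\mathbb{R}^{N}} f(t)\,h(t-s)\,g(s)\,\dd t\,\dd s \;\le\; \int_{\mathbb{R}^{N}}\!\int_{\mathbb{R}^{N}} f^*(t)\,h^*(t-s)\,g^*(s)\,\dd t\,\dd s.
\]
Moreover $|f^*|_p=|f|_p$, $|g^*|_r=|g|_r$, and $|h^*|_{q_w}=|h|_{q_w}$.

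Next I would exploit the definition of the weak $L^q$-norm to dominate $h^*$ by a multiple of the Riesz kernel. Since $h\in L^q_w(\mathbb{R}^{N})$ means
\[
\bigl|\{x\in\mathbb{R}^{N}:\,h(x)>\lambda\}\bigr|\le \bigl(|h|_{q_w}/\lambda\bigr)^{q},\qquad \lambda>0,
\]
and $h^*(x)$ is the smallest $\lambda$ such that $|\{h>\lambda\}|\le |B(0,|x|)|$, a direct computation gives
\[
h^*(x)\;\le\; C_N\,|h|_{q_w}\,|x|^{-N/q}
\]
for a purely dimensional constant $C_N$. Substituting this bound above yields
\[
\int\!\!\int f(t)\,h(t-s)\,g(s)\,\dd t\,\dd s \;\le\; C_N\,|h|_{q_w}\!\int\!\!\int f^*(t)\,|t-s|^{-N/q}\,g^*(s)\,\dd t\,\dd s.
\]

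Finally I would invoke the classical HLS inequality (applied to $f^*,g^*$ with kernel $|\cdot|^{-N/q}$ and the conjugacy relation $\tfrac{1}{p}+\tfrac{1}{q}+\tfrac{1}{r}=2$), obtaining
\[
\int\!\!\int f^*(t)\,|t-s|^{-N/q}\,g^*(s)\,\dd t\,\dd s \;\le\; C_{p,q,r}\,|f^*|_p\,|g^*|_r \;=\; C_{p,q,r}\,|f|_p\,|g|_r,
\]
and the proposition follows with $N_{p,q,r}=C_N\,C_{p,q,r}$.

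The main obstacle is that the classical HLS inequality is itself nontrivial; in a self-contained treatment one would establish it either by the Brascamp--Lieb--Luttinger symmetric decreasing rearrangement machinery or, alternatively, by Marcinkiewicz real interpolation from the weak-type estimate $\|h_0*f\|_{L^{s,\infty}}\lesssim\|f\|_{L^p}$ (with $\tfrac{1}{s}=\tfrac{1}{p}+\tfrac{1}{q}-1$) followed by duality with $L^r$ via H\"older. Since the proposition is attributed to Lieb and will be used only as a black box to handle the convolution term in the Hartree problem \eqref{Hartree}, I would simply cite \cite{Lieb} for the classical case and present the three-step reduction above as the extension to the weak-$L^q$ setting.
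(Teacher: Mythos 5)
The paper does not prove Proposition \ref{pLieb} at all: it is quoted verbatim as a known result of Lieb and used as a black box for the Hartree term, so there is no ``paper proof'' to match. Your three-step reduction is nevertheless correct and is essentially the standard way to pass from the classical HLS inequality to the weak-$L^q$ version. The ingredients all check out: after reducing to $f,g,h\ge 0$, the Riesz rearrangement inequality dominates the left-hand side by the symmetrized integral while $|f^*|_p=|f|_p$, $|g^*|_r=|g|_r$ and $|h^*|_{q_w}=|h|_{q_w}$ (the weak norm only depends on the distribution function); the pointwise bound $h^*(x)\le \omega_N^{-1/q}\,|h|_{q_w}\,|x|^{-N/q}$ follows exactly as you say from the definition of $h^*$ and the weak-type bound on the level sets (note the constant depends on $q$ as well as $N$, which is harmless); and the exponent bookkeeping $\frac1p+\frac1r+\frac{N/q}{N}=2$ is precisely the classical HLS condition, so the final application is legitimate and not circular, since the Riesz kernel case is taken as known. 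The only thing your argument ``buys'' beyond the paper is an explicit explanation of why the weak-$L^q$ hypothesis suffices, at the cost of still invoking classical HLS (and the Riesz rearrangement inequality) as black boxes --- which is consistent with the paper's own citation of \cite{Lieb}, where the result is established by closely related rearrangement arguments.
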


\begin{lemma}\label{lemmameas}
	Let $A\subset\mathbb{R}^N$ be a measurable set. Then
	\[\int_{A}u^2(0,y)\dd y\leq \gamma_{0}\iint_{A\times [0,\infty]}\left[|\nabla u|^2+u^2\right].\]
\end{lemma}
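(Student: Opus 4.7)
The plan is to mimic the heuristic calculation carried out earlier in the Preliminaries section, but applied to the square $u^2$ rather than $|u|^t$, so that a clean Young-type inequality yields both terms on the right-hand side with the same weight.

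First I would prove the estimate for $u\in H^1(\mathbb{R}^{N+1}_+)\cap C^\infty_0(\mathbb{R}^{N+1})$, which is dense in $H^1(\mathbb{R}^{N+1}_+)$. For such $u$, compact support in the $x$-direction allows me to write, for each fixed $y\in\mathbb{R}^N$,
\[
u^2(0,y)=-\int_0^{\infty}\frac{\partial}{\partial x}\bigl[u^2(x,y)\bigr]\dd x = -2\int_0^{\infty} u(x,y)\,\frac{\partial u}{\partial x}(x,y)\dd x.
\]
Then the elementary inequality $2|ab|\leq a^2+b^2$ gives
\[
u^2(0,y)\leq \int_0^{\infty}\left[u^2(x,y)+\left|\frac{\partial u}{\partial x}(x,y)\right|^2\right]\dd x\leq \int_0^{\infty}\left[u^2(x,y)+|\nabla u(x,y)|^2\right]\dd x.
\]

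Next I would integrate this pointwise (in $y$) inequality over the measurable set $A\subset\mathbb{R}^N$ and apply Tonelli's theorem to swap the order of integration, obtaining
\[
\int_{A}u^2(0,y)\dd y\leq \int_A\int_0^{\infty}\bigl[|\nabla u|^2+u^2\bigr]\dd x\dd y=\iint_{A\times[0,\infty)}\bigl[|\nabla u|^2+u^2\bigr],
\]
which is exactly the asserted inequality with constant $\gamma_0=1$.

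Finally I would extend the estimate to general $u\in H^1(\mathbb{R}^{N+1}_+)$ by a density argument: take a sequence $u_n\in H^1(\mathbb{R}^{N+1})\cap C^\infty_0(\mathbb{R}^{N+1}_+)$ converging to $u$ in $H^1(\mathbb{R}^{N+1}_+)$, so that $\gamma(u_n)\to\gamma(u)$ in $L^2(\mathbb{R}^{N})$ by continuity of the trace operator; passing to a subsequence converging a.e., Fatou's lemma applied to the left-hand side together with norm convergence on the right-hand side yields the inequality for $u$. The only mildly delicate point is making sure the cutoff and density approximation preserves the integrability over the unbounded cylinder $A\times[0,\infty)$; this is handled by combining the continuity of the trace with the monotone/Fatou passage to the limit on $A$, which is routine. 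No ambitious machinery is required, so I do not expect a real obstacle beyond this bookkeeping.
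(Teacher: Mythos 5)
Your argument is correct and is essentially the paper's own proof: write $u^2(0,y)$ as $-\int_0^\infty \partial_x\bigl[u^2(x,y)\bigr]\dd x$, bound $2|u\,\partial_x u|$ by $u^2+|\nabla u|^2$, and integrate over $A$, which in fact yields the constant $\gamma_0=1$. The only difference is that you make the density/Fatou passage to general $u\in H^1(\mathbb{R}^{N+1}_+)$ explicit, which the paper leaves implicit.
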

\begin{proof}
	Of course, for any $y\in A$, we have
	\begin{align*}
	u^2(0,y)=-\int_0^\infty \frac{\partial}{\partial t}u^2(t,y)\dd y=-\int_0^\infty 2\frac{\partial u}{\partial t}(t,y)u(t,y)\dd t.
	\end{align*}Thus,
	\begin{align*}
	\int_A u^2(0,y)\dd y&=-\int_A\int_0^\infty 2\frac{\partial u}{\partial t}(t,y)u(t,y)\dd t\dd y\\
	&\leq \iint_{A\times [0,\infty]}2 |\nabla u(t,y)|\,|u(t,y)|\,\dd t\dd y\\
	&\leq \iint_{A\times [0,\infty]}\left[|\nabla u(t,y)|^2+u^2(t,y)\right].
	\end{align*}
$\hfill\Box$\end{proof}

\section{Proof of Theorem \ref{t1}}\label{mpg}
\begin{lemma}\label{gpm}
	The functional $J$ satisfies the mountain pass theorem geometry. More precisely,
	\begin{enumerate}
		\item [$(i)$] There exist $\rho,\delta>0$ such that $I|_S\geq \delta>0$ for all $u\in S$, where
		\[S=\left\{u\in H^1(\mathbb{R}^{N+1}_+)\,:\, \|u\|=\rho\right\};\]
		\item [$(ii)$] For any $u_0\in H^1(\mathbb{R}^{N+1}_+)$ such that $\gamma(u_0) \neq 0$ we have $J(\tau u_0) \to -\infty$ as $\tau \to \infty$. 
	\end{enumerate}
\end{lemma}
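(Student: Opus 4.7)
The plan is to verify the two standard mountain-pass geometry conditions using the growth estimates on $F$ from \eqref{boundF}, the trace embeddings $\gamma\colon H^{1}(\mathbb{R}^{N+1}_+)\hookrightarrow L^{r}(\mathbb{R}^{N})$ for $r\in[2,2^{\#}]$ established in the Preliminaries, and the superquadratic lower bound \eqref{Obs3} implied by $(f_3)$. I will set $c_m:=\min\{1,m^{2}\}$, so that $\iint_{\mathbb{R}^{N+1}_+}(|\nabla u|^{2}+m^{2}u^{2})\ge c_m\|u\|^{2}$, making $J$ controllable by the Hilbert norm.

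For item $(i)$, I would start from
\[
J(u)\;\ge\;\tfrac{c_m}{2}\|u\|^{2}\;-\;\tfrac12\int_{\mathbb{R}^{N}}F(\gamma(u))
\]
and invoke \eqref{boundF}: for any prescribed $\xi>0$ there is $C_\xi$ with $|F(t)|\le \xi t^{2}+C_\xi|t|^{q}$, where the effective power $q$ coming from $(f_2)$ lies in the admissible trace range $(2,2^{\#})$ (the condition $p\in(1,\tfrac{N+1}{N-1})$ places it strictly below $2^{\#}=\tfrac{2N}{N-1}$). Applying the trace embedding in $L^{2}$ and $L^{q}$ then yields
\[
\int_{\mathbb{R}^{N}}F(\gamma(u))\;\le\;\xi\,C_{*}\|u\|^{2}\;+\;C'_{\xi}\|u\|^{q},
\]
so that $J(u)\ge\bigl(\tfrac{c_m}{2}-\tfrac12\xi C_{*}\bigr)\|u\|^{2}-\tfrac12 C'_\xi\|u\|^{q}$. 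Fixing $\xi$ with $\xi C_{*}<c_m/2$ and using $q>2$, the polynomial $a\rho^{2}-b\rho^{q}$ is strictly positive for small $\rho$; so there are $\rho,\delta>0$ with $J(u)\ge\delta$ whenever $\|u\|=\rho$.

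For item $(ii)$, \eqref{Obs3} gives $F(\tau t)\ge C_{0}\tau^{\theta}|t|^{\theta}$ with $\theta>2$. Given $u_{0}\in H^{1}(\mathbb{R}^{N+1}_+)$ with $\gamma(u_{0})\not\equiv 0$ (the convention $f\equiv 0$ on $(-\infty,0)$ means the nonlinear term only sees $\gamma(u_{0})^{+}$, and the positive part may be assumed nontrivial after a harmless replacement), I would estimate
\[
J(\tau u_{0})\;\le\;\tfrac{\tau^{2}}{2}\|u_{0}\|^{2}\;-\;\tfrac{C_{0}\tau^{\theta}}{2}\int_{\{\gamma(u_{0})>0\}}\gamma(u_{0})^{\theta}.
\]
The integral is finite by the trace embedding (the Ambrosetti--Rabinowitz exponent $\theta$ is pinned below the growth exponent $q<2^{\#}$ produced by $(f_2)$ together with $(f_3)$) and strictly positive; since $\theta>2$, the superquadratic term dominates and $J(\tau u_{0})\to -\infty$ as $\tau\to\infty$.

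No step is genuinely hard. The two bookkeeping points that deserve attention are (a) identifying the correct effective exponent $q\in(2,2^{\#})$ in \eqref{boundF} so that the trace embedding applies and (b) handling the sign issue in $(ii)$ via the convention $f\equiv 0$ on $(-\infty,0)$; both are standard within the variational setup already prepared in Sections~1 and~2.
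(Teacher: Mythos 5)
Your argument is correct and follows essentially the same route as the paper's proof: item $(i)$ comes from the growth bound on $F$ together with the trace embeddings, absorbing the quadratic term by choosing $\xi$ small and using an exponent strictly between $2$ and $2^{\#}$, and item $(ii)$ comes from the Ambrosetti--Rabinowitz lower bound $F(t)\ge C_{0}t^{\theta}$ with $\theta>2$ along the ray $\tau u_{0}$. Your aside about working with $\gamma(u_0)^{+}$ in $(ii)$ in fact flags a point the paper silently ignores (its bound \eqref{Obs3} cannot hold for $t<0$ once $f\equiv 0$ there), but since the mountain pass construction only needs some $u_0$ with nontrivial positive trace, this does not affect the conclusion.
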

\begin{proof}Remark \eqref{obs1} and the Sobolev embedding yield
	\begin{align*}
	J(u) &= \frac{1}{2} \iint_{\mathbb{R}^{N+1}_{+}}\left(\vert \nabla u(x,y) \vert^{2} + m^2\vert u(x,y) \vert^{2}\right)- \int_{\mathbb{R}^{N}}F(\gamma(u)) \dd y \\
	& \geq \frac{1}{2} \min\{1,m^2\} \Vert u \Vert^{2} - \varepsilon \int_{\mathbb{R}^{N}} \vert \gamma(u)\vert^2 \dd y -C_{\varepsilon} \int_{\mathbb{R}^{N}} \vert \gamma(u) \vert^{p+1} \dd y\\
	&\geq \left(\frac{C}{2} - C_{1}\varepsilon \right) \Vert u \Vert^{2} - \tilde{C} \Vert u \Vert^{p+1}
	\end{align*}
	
	Taking $0 < \varepsilon < \frac{C}{2C_1}$, for some $a, A>0$ we obtain
	\[
	J(u) \geq a\Vert u \Vert^{2} - A\Vert u \Vert^{p+1}, \,\,\, \text{for all} \,\,\, u \in H^{1}(\mathbb{R}^{N+1}_{+}).
	\]
	Since $p \in \left(1, \frac{N+1}{N-1}\right)$, condition ($i$) is proved. 
	
	In order to prove $(ii)$, fix $u_0 \in H^{1}(\mathbb{R}^{N+1}_{+})$ with $\gamma(u_0) \neq 0$. Thus,
	\[
	J(\tau u_0) \leq \frac{1}{2} \max\{1,m^2\} \tau^{2}\Vert u_0 \Vert^{2} - \int_{\mathbb{R}} F(\gamma (\tau u_0) )\dd y \\
	\leq C \tau^{2}\Vert u_0 \Vert^{2} - C_1\tau^{\theta} \Vert u_0 \Vert^{\theta},
	\]
	the last inequality being a consequence of the Ambrosetti-Rabinowitz condition ($f_3$). Therefore, since $\theta >2$ we obtain
$J(\tau u_0) \to -\infty$ when $\tau \to \infty$, completing the proof.
$\hfill\Box$\end{proof}\vspace*{.4cm}

The existence of a Palais-Smale sequence $(u_n)\subset H^1(\mathbb{R}^{N+1}_+)$ such that
\[J'(u_n)\to 0\qquad\textrm{and}\qquad J(u_n)\to c,\]
where
\begin{align*}c&=\inf_{\alpha\in \Gamma}\max_{t\in [0,1]}J(\alpha(t)),\\
\intertext{and}
\Gamma&=\left\{\alpha\in C^1\left([0,1],H^1(\mathbb{R}^{N+1}_+)\right)\,:\,\alpha(0)=0,\,\alpha(1)<0\right\}
\end{align*} 
is a consequence of the mountain pass theorem without the PS-condition. It is well-known an alternative characterization of the minimax value $c$, see \cite{Rabinowitz} for details,
\begin{align}\label{charac}
c=\inf_{u\in H^1(\mathbb{R}^{N+1}_+)\setminus\{0\}}\max_{t\geq 0}J(tu) > 0.\end{align}
 \vspace*{.2cm}

\noindent\textit{Proof of Theorem \ref{t1}.} Let $(u_n)\subset H^1(\mathbb{R}^{N+1}_+)$ be a sequence such that $J(u_n)\to c$ and $J'(u_n)\to 0$, with $c$ given by \eqref{charac}. Since we have, for all $n$ sufficiently large, 
\begin{align*}
c + \epsilon + \Vert u_n \Vert &\geq J(u_n)- \frac{1}{\theta} J'(u_n)\cdot u_n \\
&= \left(\frac{1}{2} - \frac{1}{\theta}\right) \iint_{\mathbb{R}^{N+1}_{+}}\left(\vert \nabla u_n \vert^{2} + m^2\vert u_n \vert^{2}\right)\\ &\quad + \frac{1}{\theta}\int_{\mathbb{R}^{N}} \left[\theta F(\gamma(u_n)) -  f(\gamma(u_n))\gamma(u_n)\right] \dd y  \\
&\geq \left(\frac{1}{2} - \frac{1}{\theta}\right)C \Vert u_n \Vert^{2}
\end{align*}
and $\theta >2$, we conclude  $(u_n)$ is bounded in $H^{1}(\mathbb{R}^{N+1}_{+})$. 

Thus, for a subsequence 
\[u_n \rightharpoonup u \,\,\, \text{in} \,\, H^{1}(\mathbb{R}^{N+1}_{+}) \,\,\, \text{and} \,\,\, \gamma(u_n) \rightharpoonup \gamma(u) \,\,\, \text{in} \,\,\, L^{q}(\mathbb{R}^{N}),
\]
for all $q \in \left[2,2^{\#} \right]$.
	
As a consequence, for all $v \in H^{1}(\mathbb{R}^{N+1}_{+})$ we have
\[J'(u)\cdot v = \lim_{n \to \infty} J'(u_n)\cdot v = 0.
\]
	

\noindent\textbf{Claim.} There exists a sequence $(y_n)$ in $\mathbb{R}^{N}$ and $\beta > 0$ such that,
\[\int_{B_{1}(y_n)} \vert \gamma(u_n) \vert^{2} \dd y > \beta, \,\,\,\,\, \forall n \in \mathbb{N}.\]
	
In fact, assume that
\[\limsup_{n\to\infty}\sup_{y\in\mathbb{R}^N}\int_{B_1(y)}|u_n|^2\dd x = 0.
\]
	
Thus $u_n\to 0$ in $L^q(\mathbb{R}^N)$ for any $q\in (2,2^{*}_s)$ (see \cite[Lemma 1.2.1]{Willem}). Since $(u_n)$ is bounded in $L^2(\mathbb{R}^N)$, there exists $M_0>0$ such that $|u_n|_2\leq M$ for all $n\in\mathbb{N}$. So, for any $\eta>0$, by taking $\epsilon=\eta/M_0$, we conclude that
\[\int_{ \mathbb{R}^N}|F(u_n)|\dd x\leq \epsilon |u_n|^2_2+C_\epsilon |u_n|^{p+1}_{p+1}=\eta+C_\epsilon |u_n|^{p+1}_{p+1}\]
and since $p \in \left(1,2_{\#}\right)$ and $|u_n|_{p+1} \to 0$ we obtain
\[\int_{ \mathbb{R}^N}F(u_n)\dd x\to 0,\ \text{when }\ n\to\infty.\]
	
Similarly,
\[\int_{ \mathbb{R}^N}f(u_n)u_n\dd x\to 0,\ \text{ when }\ n\to \infty.\]
	
Consequently, when $n\to\infty$,
\[\iint_{\mathbb{R}^{N+1}_{+}}\left(\vert \nabla u_n \vert^{2} + m^2\vert u_n \vert^{2}\right) = J'(u_n)\cdot u_n + \int_{\mathbb{R}^{N}} f(\gamma(u_n))\gamma(u_n) \dd y  \rightarrow 0, 
\]
from what follows
\[0 < c = \lim_{n \to \infty}J(u_n) 
= 0,\]
reaching a contradiction that proves the Claim.
	
Therefore, there exists $\beta >0$ and a sequence $(y_n)$ such that, for all $n\in\mathbb{N}$,
\begin{align}\label{wneq0}
\int_{B_1(y_n)}|\gamma(u_n)|^2\dd x \geq \beta>0.
\end{align}
	
Now, we define $w_n(x,y)=u_n(x, y+y_n)$. Then $\|w_n\|=\|u_n\|$, $J(w_n)=J(u_n)$ and $J'(w_n)\to 0$ when $n\to\infty$. Passing to a subsequence if necessary, we can suppose that, for $q\in [2,2^{*}_s)$, we have
\begin{align*}&w_n\rightharpoonup w\quad \text{in }\ \ H^{1}(\mathbb{R}^{N+1}_{+}),\qquad \gamma(w_n)\to \gamma(w)\ \ \text{in }\ L^q_{loc}(\mathbb{R}^N)\\ \intertext{and}
&\gamma(w_n)(y)\to \gamma(w)(y)\ \ \text{a.e. in }\ \mathbb{R}^N.
\end{align*}
	
Note that
\begin{align*}
\int_{B_{1}(0)} \vert \gamma (w) \vert^{2} \dd y = \lim_{n \to \infty}\int_{B_{1}(0)} \vert \gamma (w_n) \vert^{2} \dd y = \lim_{n \to \infty}\int_{B_{1}(y_n)} \vert \gamma (u_n) \vert^{2} \dd y \geq \beta > 0,
\end{align*}
that is, $\gamma(w) \neq 0$.
	
Furthermore, for all $v \in H^{1}(\mathbb{R}^{N+1}_{+})$, we have
\begin{align*}
J'(w)\cdot v &=\lim_{n\to\infty}\left[\iint_{\mathbb{R}^{N+1}_{+}}\left(\nabla w_n \nabla v + m^2 u w_n v \right) -\int_{ \mathbb{R}^N}f(\gamma(w_n)) \gamma (v) \dd x\right]\\
&=\lim_{n\to\infty} J'(w_n)\cdot v =0,
\end{align*}
and we conclude that $J'(w)=0$. 

We now turn our attention to the positivity of $w$. Seeing that
\[\iint_{\mathbb{R}^{N+1}_+}\left(\nabla w\cdot \nabla v+m^2wv\right)=\int_{\mathbb{R}^{N}}f(\gamma(w))\gamma(v)\]
and choosing $v=w^-$, the left-hand side of the equality is positive,  while the right-hand side is not positive. The proof is complete.
$\hfill\Box$
\section{Proof of Theorem \ref{classical}}
Following arguments in \cite{ZelatiNolasco}, we have:
\begin{lemma}\label{c1} For all $\theta\in \left(2,\frac{2N}{N-1}\right)$, we have $|\gamma(u)|^{\theta-2}\leq 1+g_2$,	where $g_2\in L^N(\mathbb{R}^{N})$.
\end{lemma}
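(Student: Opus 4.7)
The plan is to split the domain $\mathbb{R}^{N}$ into the two regions $\{|\gamma(u)| \leq 1\}$ and $\{|\gamma(u)| > 1\}$ and define $g_2$ so that it captures only the large-value part of $|\gamma(u)|^{\theta-2}$. More precisely, I set
\[g_2(y) := |\gamma(u)(y)|^{\theta-2}\,\chi_{\{|\gamma(u)|>1\}}(y).\]
The pointwise bound $|\gamma(u)|^{\theta-2} \leq 1 + g_2$ is then immediate: on $\{|\gamma(u)| \leq 1\}$ the exponent $\theta-2 > 0$ gives $|\gamma(u)|^{\theta-2} \leq 1$, while on $\{|\gamma(u)| > 1\}$ we have $|\gamma(u)|^{\theta-2} = g_2 \leq 1 + g_2$.

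The substance of the lemma is the claim that $g_2 \in L^{N}(\mathbb{R}^{N})$, and this reduces to a single exponent check. The hypothesis $\theta < \frac{2N}{N-1} = 2^{\#}$ gives
\[\theta - 2 \;<\; \frac{2N}{N-1} - 2 \;=\; \frac{2}{N-1},\]
hence $N(\theta-2) < \frac{2N}{N-1} = 2^{\#}$. Then on the set $\{|\gamma(u)|>1\}$, where raising to a smaller positive power produces a smaller value, we obtain
\[g_2^{N} \;=\; |\gamma(u)|^{N(\theta-2)} \chi_{\{|\gamma(u)|>1\}} \;\leq\; |\gamma(u)|^{2^{\#}} \chi_{\{|\gamma(u)|>1\}}.\]

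By the preliminary estimate \eqref{gammav} (with $t = 2^{\#}$) the trace $\gamma(u)$ belongs to $L^{2^{\#}}(\mathbb{R}^{N})$, so the right-hand side is integrable, which yields $g_2 \in L^{N}(\mathbb{R}^{N})$ and finishes the argument. I do not expect any serious obstacle here: the only delicate point is checking that the exponent range $\theta \in \bigl(2, \tfrac{2N}{N-1}\bigr)$ is precisely what is needed to match $N(\theta-2)$ against the critical trace exponent $2^{\#}$, which is exactly the reason the bound $\frac{2N}{N-1}$ appears in the statement.
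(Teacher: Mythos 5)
Your proof is correct and follows essentially the same route as the paper: the identical decomposition with $g_2=|\gamma(u)|^{\theta-2}\chi_{\{|\gamma(u)|>1\}}$ and the trace estimate \eqref{gammav}. The only (harmless) difference is that you bound $|\gamma(u)|^{N(\theta-2)}\leq|\gamma(u)|^{2^{\#}}$ on $\{|\gamma(u)|>1\}$ in one stroke, whereas the paper splits into the cases $N(\theta-2)\leq 2$ and $N(\theta-2)>2$, using $\gamma(u)\in L^2$ and $\gamma(u)\in L^{N(\theta-2)}$ respectively; both verifications rest on the same ingredients.
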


\noindent\begin{proof}We have
\[|\gamma(u)|^{\theta-2}=|\gamma(u)|^{\theta-2}\chi_{\{|\gamma(u)|\leq 1\}}+|\gamma(u)|^{\theta-2}\chi_{\{|\gamma(u)|>1\}}\leq 1+g_2,\]
with $g_2=|\gamma(u)|^{\theta-2}\chi_{\{|\gamma(u)|>1\}}$. If $(\theta-2)N\leq 2$, then
\[\int_{\mathbb{R}^{N}}|\gamma(u)|^{(\theta-2)N}\chi_{\{|\gamma(u)|>1\}}\leq \int_{\mathbb{R}^{N}}|\gamma(u)|^2\chi_{\{|\gamma(u)|>1\}}\leq\int_{\mathbb{R}^{N}}|\gamma(u)|^2<\infty.\]
	
When $2<(\theta-2)N$, then $(\theta-2)N\in \left(2,\frac{2N}{N-1}\right)$ and $|\gamma(u)|^{\theta-2}\in L^N(\mathbb{R}^{N})$ as an outcome of \eqref{gammav}.
	$\hfill\Box$\end{proof}

The proof of the next result adapts arguments in \cite{Cabre} and \cite{ZelatiNolasco}.
\begin{proposition}\label{p1} For all $p\in [2,\infty)$ we have $\gamma(v)\in L^p(\mathbb{R}^{N})$.
\end{proposition}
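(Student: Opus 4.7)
The plan is a Moser/Brezis--Kato iteration on the boundary trace. I would start from the weak formulation of \eqref{P} and test against $\varphi = v\, v_L^{2(\beta-1)}$, where $v_L := \min\{|v|, L\}$ is a bounded truncation and $\beta \geq 1$ is the parameter to be iterated. Boundedness of $v_L$ makes $\varphi$ admissible in $H^1(\mathbb{R}^{N+1}_+)$, and working with $|v|$ one may assume $v \geq 0$. A direct chain-rule computation yields the pointwise bounds $\nabla v \cdot \nabla \varphi \geq v_L^{2(\beta-1)}|\nabla v|^2$ and $|\nabla V|^2 \leq \beta^2 v_L^{2(\beta-1)}|\nabla v|^2$, where $V := v\, v_L^{\beta-1}$.

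Combining these identities with the weak equation, applying the trace embedding $H^1(\mathbb{R}^{N+1}_+) \hookrightarrow L^{2^{\#}}(\mathbb{R}^N)$ to $V$ on the left, and using the growth bound $|f(t)|\, t \leq \xi t^2 + C_\xi t^{p+1}$ (from Remark \ref{obs1}) on the right, one arrives at
\[
|w|_{2^{\#}}^2 \leq C\beta^2 \Bigl(\xi\, |w|_2^2 + C_\xi \int_{\mathbb{R}^N} u^{p-1}\, w^2 \, \dd y\Bigr),
\]
where $u = \gamma(v)$ and $w = u\, u_L^{\beta-1}$. Since $p+1 \in \bigl(2, 2^{\#}\bigr)$, Lemma \ref{c1} applied with $\theta = p+1$ gives $u^{p-1} \leq 1 + g_2$ with $g_2 \in L^N(\mathbb{R}^N)$, so the last integral is controlled by $|w|_2^2 + \int g_2\, w^2\, \dd y$. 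A classical Brezis--Kato splitting $g_2 = g_2 \chi_{\{g_2 \leq K\}} + g_2 \chi_{\{g_2 > K\}}$, with $K$ chosen large enough that $C\beta^2 C_\xi\, |g_2 \chi_{\{g_2 > K\}}|_N \leq 1/2$, allows one to absorb the $|w|_{2^{\#}}^2$ contribution into the left-hand side and yields the $L$-uniform estimate $|w|_{2^{\#}}^2 \leq C(\beta)\, |w|_2^2$.

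Letting $L \to \infty$, monotone convergence on both sides upgrades this to
\[
|u|_{\beta \cdot 2^{\#}}^{2\beta} \leq C(\beta)\, |u|_{2\beta}^{2\beta},
\]
valid whenever $u \in L^{2\beta}(\mathbb{R}^N)$. Starting from $u \in L^{2^{\#}}$, already known from \eqref{gammav}, and iterating with $\beta_{n+1} = \beta_n \cdot N/(N-1)$ and $\beta_0 = 1$, the exponents $2\beta_n$ tend to infinity, so $u \in L^q(\mathbb{R}^N)$ for arbitrarily large $q$. Interpolation with $u \in L^2$ then covers the full range $[2,\infty)$.

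The main obstacle is to carry out the Brezis--Kato absorption uniformly in the truncation parameter $L$: the threshold $K$ must depend on $\beta$ but not on $L$, which is precisely ensured by the $L$-independent membership $g_2 \in L^N(\mathbb{R}^N)$ supplied by Lemma \ref{c1}, together with absolute continuity of the integral $\int g_2^N$. A secondary technical point is the chain-rule identity for $v_L$ on the level set $\{v = L\}$, which is standard but should be handled via the usual Stampacchia truncation argument.
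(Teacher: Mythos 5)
Your proposal is correct and follows essentially the same route as the paper: the same truncated test function $v\,v_T^{2\beta}$, the same use of Lemma \ref{c1} with $g_2\in L^N(\mathbb{R}^N)$, the same Brezis--Kato splitting with a truncation-independent threshold to absorb the $L^{2^{\#}}$ term, and the same pass to the limit in the truncation followed by a Moser iteration. The only cosmetic differences are that you iterate with the factor $N/(N-1)$ and work with $|v|$, while the paper iterates with the factor $\theta/2$ and treats $v_+$ and $v_-$ separately.
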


\noindent\begin{proof}
Choosing $\varphi=\varphi_{\beta,T}=vv^{2\beta}_T$ in \eqref{derivative}, where $v_T=\min\{v_+,T\}$ and $\beta>0$, we have $0\leq \varphi_{\beta,T}\in H^1(\mathbb{R}^{N+1}_+)$ and
\begin{align}\label{varphibetaT}
\iint_{\mathbb{R}^{N+1}_+}\left[\nabla v\cdot\nabla \varphi_{\beta,T}+m^2v\varphi_{\beta,T}\right]
=\int_{\mathbb{R}^{N}}f(\gamma(v))\gamma(\varphi_{\beta,T})\dd y,
\end{align}
Since $\nabla\varphi_{\beta,T}=v^{2\beta}_T\nabla v+2\beta vv^{2\beta-1}_T\nabla v_T$,
the left-hand side of \eqref{varphibetaT} is given by
\[
\iint_{\mathbb{R}^{N+1}_+}\nabla v\cdot \left(v^{2\beta}_T\nabla v+2\beta vv^{2\beta-1}_T\nabla v_T\right)+m^2v\left(vv^{2\beta}_T\right)\]
\begin{equation}\label{varphibetaTl}=\iint_{\mathbb{R}^{N+1}_+}v^{2\beta}_T\left[|\nabla v|^2+m^2v^2\right]+2\beta\iint_{D_T}v^{2\beta}_T|\nabla v|^2,
\end{equation}
where $D_T=\{(x,y)\in (0,\infty)\times \mathbb{R}^{N}\,:\, v_T(x,y)\leq T\}$.

Now we express \eqref{varphibetaTl} in terms of $\|vv^\beta_T\|^2$. For this, we note that $\nabla(vv^\beta_T)=v^\beta_T\nabla v+\beta vv^{\beta-1}_T\nabla v_T$. Therefore,
\[\iint_{\mathbb{R}^{N+1}_+}|\nabla (vv^\beta_T)|^2=\iint_{\mathbb{R}^{N+1}_+}v^{2\beta}_T|\nabla v|^2+(2\beta+\beta^2)\iint_{D_T}v^{2\beta}_T|\nabla v|^2,\]
thus yielding
\begin{align}\label{norm}
\|vv^\beta_T\|^2
&=\left(\iint_{\mathbb{R}^{N+1}_+}v^{2\beta}_T|\nabla v|^2+(2\beta+\beta^2)\iint_{D_T}v^{2\beta}_T|\nabla v|^2\right)+\iint_{\mathbb{R}^{N+1}_+}(vv^\beta_T)^2\nonumber\\
&=\iint_{\mathbb{R}^{N+1}_+}v^{2\beta}_T\left(|\nabla v|^2+|v|^2\right)+2\beta\left(1+\frac{\beta}{2}\right)\iint_{D_T}v^{2\beta}_T|\nabla v|^2\nonumber\\
&\leq C_\beta\left[\iint_{\mathbb{R}^{N+1}_+}v^{2\beta}_T\left(|\nabla v|^2+m^2|v|^2\right)+2\beta\iint_{D_T}v^{2\beta}_T|\nabla v|^2\right],
\end{align}
where $C_\beta=\max\left\{m^{-2},\left(1+\frac{\beta}{2}\right)\right\}$. Gathering \eqref{varphibetaT}, \eqref{varphibetaTl} and \eqref{norm}, we obtain
\begin{align}\label{norm=r}
\|vv^\beta_T\|^2\leq& C_\beta
\int_{\mathbb{R}^{N}}f(\gamma(v))\gamma(v)\gamma(v_T)^{2\beta}.
\end{align}

Since $|f(t)|\leq C_1(|t|+|t|^{\theta-1})$, it follows from \eqref{norm=r}
\begin{align}\label{rhs}
\|vv^\beta_T\|^2&\leq C_\beta\int_{\mathbb{R}^{N}}C_1\left(|\gamma(v)|+|\gamma(v)|^{\theta-1}\right)|\gamma(v)|\gamma(v_T)^{2\beta}\nonumber\\
&\leq C_\beta C_1\left[\int_{\mathbb{R}^{N}}\gamma(vv_T^\beta)^{2}+\int_{\mathbb{R}^{N}}|\gamma(v)|^{\theta-2}\gamma(v)^2\gamma(v_T)^{2\beta}\right].
\end{align}

Applying Lemma \ref{c1}, inequality \eqref{rhs} becomes
\begin{align}\label{rhs2}
\|vv^\beta_T\|^2&\leq C_\beta C_1\left[\int_{\mathbb{R}^{N}}\gamma(vv_T^\beta)^{2}+\int_{\mathbb{R}^{N}}\left(1+g_2\right)\gamma(vv_T^\beta)^{2}\right]\nonumber\\
&\leq C_\beta C_1\left[2\int_{\mathbb{R}^{N}}\gamma(vv_T^\beta)^{2}
+\int_{\mathbb{R}^{N}}g_2\gamma(vv_T^\beta)^{2}\right],
\end{align}
where $g_2\in L^N(\mathbb{R}^{N})$.

Because $|\gamma(u)|_{2^\#}\leq C_{2^{\#}}\|u\|$ for all $u\in H^1(\mathbb{R}^{N+1}_+)$, it follows then from \eqref{norm=r} and \eqref{rhs2} that
\begin{align}\label{rhs3}
|\gamma(vv^{\beta}_T)|^2_{2^{\#}}&\leq C^2_{2^{\#}}C_\beta C_1\left[2\int_{\mathbb{R}^{N}}\gamma(vv_T^\beta)^{2}+\int_{\mathbb{R}^{N}}g_2\gamma(vv_T^\beta)^{2}\right].
\end{align}

Let us consider the last integral in the right-hand side of \eqref{rhs3}. For all $M>0$, define $A_1=\{g_2\leq M\}$ and $A_2=\{g_2>M\}$. Then, whereas $g_2\in L^N(\mathbb{R}^{N})$,
\begin{align*}
\int_{\mathbb{R}^{N}}g_2\gamma(vv_T^\beta)^{2}\leq& M\int_{A_1}\gamma(vv_T^\beta)^{2}+\left(\int_{A_2}g_2^N\right)^{\frac{1}{N}}\left(\int_{A_2}\gamma(vv_T^\beta)^{2\frac{N}{N-1}}\right)^{\frac{N-1}{N}}\nonumber\\
\leq&M\int_{\mathbb{R}^{N}}\gamma(vv_T^\beta)^{2}+\epsilon(M)\left(\int_{\mathbb{R}^{N}}\gamma(vv_T^\beta)^{2^{\#}}\right)^{\frac{N-1}{N}},
\end{align*}
and $\epsilon(M)=\left(\int_{A_2}g_2^N\right)^{1/N}\to 0$ when $M\to\infty$.

If $M$ is taken so that $\epsilon(M)C^2_{2^{\#}}C_\beta C_1<1/2$, we obtain
\begin{align}\label{rhs4a}
|\gamma(vv^{\beta}_T)|^2_{2^{\#}}&\leq K\int_{\mathbb{R}^{N}}\gamma(vv_T^\beta)^{2}=K|\gamma(vv^{\beta}_T|^2_2,
\end{align}
for a positive constant $K$ depending on $\beta$. Now, since $vv^\beta_T\to v^{1+\beta}_+$ when $T$ goes to infinity, it follows
\begin{align}\label{rhs4}
|\gamma(v^{1+\beta}_+)|^2_{2^{\#}}\leq &K|\gamma(v^{1+\beta}_+)|^2_2.
\end{align}
%
%


Choosing $\beta_1+1:=(\theta/2)>1$, it follows from \eqref{gammav} that the right-hand side of \eqref{rhs4} is finite. We conclude that $|\gamma(v_+)|\in {L^{2\frac{\theta}{2}}}(\mathbb{R}^{N})<\infty$. Now, we choose $\beta_2$ so that $\beta_2+1=(\theta/2)^2$ and conclude that
\[|\gamma(v_+)|\in L^{2\frac{\theta^2}{2^2}}(\mathbb{R}^{N}).\]

After $k$ iterations we obtain that
\[|\gamma(v_+)|\in L^{2\frac{\theta^k}{2^k}}(\mathbb{R}^{N}),\]
from what follows that $\gamma(v_+)\in L^p(\mathbb{R}^{N})$ for all $p\in [2,\infty)$. Since the same arguments are valid for $v_-$, we have $\gamma(v)\in L^p(\mathbb{R}^{N})$ for all $p\in [2,\infty)$.
$\hfill\Box$\end{proof}\vspace*{.2cm}

By simply adapting the proof given in \cite{ZelatiNolasco}, we present, for the convenience of the reader, the proof of our next result:\vspace*{.2cm}

\begin{proposition}\label{t2}
	Let $v\in H^1(\mathbb{R}^{N+1}_+)$ be a weak solution of \eqref{P}. Then $\gamma(v)\in L^p(\mathbb{R}^{N})$ for all $p\in [2,\infty]$ and $v\in L^\infty(\mathbb{R}^{N+1}_+)$.
\end{proposition}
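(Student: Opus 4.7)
The strategy has two parts. First, I would extend Proposition \ref{p1} to $\gamma(v) \in L^\infty(\mathbb{R}^{N})$ by a Moser iteration scheme, tracking the $\beta$-dependence of the constant in \eqref{rhs4}. Second, I would propagate the $L^\infty$ bound from the trace to the bulk via a maximum-principle test function in \eqref{derivative}.

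For the trace bound, rewrite \eqref{rhs4} as
\[|\gamma(v_+)|_{2^{\#}(1+\beta)} \;\leq\; K(\beta)^{1/(2(1+\beta))}\,|\gamma(v_+)|_{2(1+\beta)}, \qquad \beta\geq 0.\]
Inspecting \eqref{rhs2}--\eqref{rhs4}, $K(\beta)$ depends on $\beta$ only through the factor $C_\beta = \max\{m^{-2}, 1+\beta/2\}$ and through the threshold $M(\beta)$ that must satisfy $\epsilon(M)\,C_{2^{\#}}^2 C_\beta C_1 < 1/2$. Since Proposition \ref{p1} delivers $g_2 \in L^q(\mathbb{R}^{N})$ for every finite $q\geq N$, Chebyshev's inequality combined with the layer-cake formula gives $\epsilon(M) \leq C_q M^{1-q/N}$ for any $q>N$; consequently $M(\beta)$, and hence $K(\beta)$, can be chosen to grow polynomially, say $K(\beta) \leq C_0 (1+\beta)^a$. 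Set $\chi := 2^{\#}/2 = N/(N-1) > 1$, $p_k := 2\chi^k$, $\beta_k := \chi^k - 1$. Substituting $\beta = \beta_k$ and iterating from $p_0 = 2$ produces
\[|\gamma(v_+)|_{p_{k+1}} \;\leq\; \prod_{j=0}^{k} K(\beta_j)^{1/p_j}\,|\gamma(v_+)|_2.\]
Because $\log K(\beta_j) = O(j)$ while $p_j^{-1} = O(\chi^{-j})$, the series $\sum_j \log K(\beta_j)/p_j$ converges; the product is therefore bounded uniformly in $k$, and letting $k\to\infty$ yields $|\gamma(v_+)|_\infty < \infty$. Applying the same argument to $v_-$ gives $\gamma(v) \in L^\infty(\mathbb{R}^{N})$.

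For the bulk bound, set $k_0 := |\gamma(v)|_\infty$ and test \eqref{derivative} with $\varphi := (v - k_0)^+ \in H^1(\mathbb{R}^{N+1}_+)$. Since the trace commutes with the positive part and $\gamma(v) \leq k_0$ a.e.\ on $\mathbb{R}^{N}$, we have $\gamma(\varphi) = (\gamma(v) - k_0)^+ = 0$, so $\varphi \in \ker\gamma = H^1_0(\mathbb{R}^{N+1}_+)$ and the boundary integral in \eqref{derivative} vanishes, leaving
\[\iint_{\{v>k_0\}}|\nabla v|^2 \;+\; m^2 \iint_{\mathbb{R}^{N+1}_+} v\,(v-k_0)^+ \;=\; 0.\]
On $\{v>k_0\}$ both $v$ and $v-k_0$ are strictly positive, so both summands are nonnegative and must vanish individually; in particular $(v-k_0)^+ \equiv 0$, i.e.\ $v \leq k_0$ a.e. Testing with $\psi := (-v-k_0)^+$ similarly yields $v\geq -k_0$. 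Therefore $\|v\|_\infty \leq |\gamma(v)|_\infty < \infty$.

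The main obstacle is the first step: the Moser constant $K(\beta)$ in \eqref{rhs4} is not stated quantitatively, and controlling its $\beta$-dependence requires revisiting the splitting of $g_2$ performed in the proof of Proposition \ref{p1} and exploiting the higher integrability granted by that proposition to quantify the rate $\epsilon(M)\to 0$. Once polynomial growth of $K(\beta)$ is established, the remainder is standard Moser iteration followed by an elementary weak-maximum-principle argument exploiting the ambient mass term $m^2 u$.
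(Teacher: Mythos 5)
Your proposal is correct, and it diverges from the paper in both halves, though only the second is a genuinely different argument. For the trace bound the paper also runs a Moser iteration, but it does not go back and quantify $\epsilon(M)$ in \eqref{rhs2}--\eqref{rhs4} as you do; instead it uses Proposition \ref{p1} to replace $g_2$ by $g_3=|\gamma(v)|^{\theta-2}\chi_{\{|\gamma(v)|>1\}}\in L^{2N}(\mathbb{R}^{N})$, splits the boundary term by H\"older and Young with a parameter $\lambda$ chosen of size $C_\beta|g_3|_{2N}$, and so obtains the explicit constant $C_3C_\beta\leq M^2e^{2\sqrt{1+\beta}}$ in \eqref{fest}, whose roots $M^{1/(1+\beta_n)}e^{1/\sqrt{1+\beta_n}}$ have summable logarithms along $1+\beta_n=(N/(N-1))^n$; your Chebyshev estimate $\epsilon(M)\leq C_qM^{1-q/N}$ (legitimate, since after Proposition \ref{p1} one has $g_2\in L^q$ for every finite $q$) achieves the same polynomial control of $K(\beta)$, so the two iterations are equivalent in substance. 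For the interior bound the routes really differ: the paper stays inside the iteration machinery, taking $\lambda=1$ in \eqref{esth1} and the uniform bound $|\gamma(v_+^{1+\beta})|_p\leq C_4$ to control $\|v_+\|_{2^{*}(1+\beta)}$ uniformly in $\beta$ and letting $\beta\to\infty$, whereas you invoke a weak maximum principle: testing \eqref{derivative} with $(v-k_0)^+$, whose trace vanishes because $k_0=|\gamma(v)|_\infty\geq 0$ and Lipschitz truncations commute with the trace, so the boundary term drops and the mass term $m^2\iint v(v-k_0)^+$ forces $|\{v>k_0\}|=0$ (similarly from below). Your argument is more elementary, avoids any further use of the nonlinearity, and yields the sharper quantitative conclusion $\|v\|_{L^\infty(\mathbb{R}^{N+1}_+)}\leq|\gamma(v)|_\infty$, while the paper's version has the advantage of recycling estimates already in hand without needing the trace-commutation fact or the identification $\ker\gamma=H^1_0(\mathbb{R}^{N+1}_+)$.
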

\noindent\begin{proof} We recall equation \eqref{norm=r}:
\begin{align*}
\|vv^\beta_T\|^2\leq& C_\beta\int_{\mathbb{R}^{N}}f(\gamma(v))\gamma(v)\gamma(v_T)^{2\beta},
\end{align*}
where $C_\beta=\max\{m^{-2},(1+\beta^2)\}$. Now $|f(t)|\leq C_1(|t|+|t|^{\theta-1})$ yields
\begin{align*}
\|vv^\beta_T\|^2&\leq C_\beta C_1\left[\int_{\mathbb{R}^{N}}\gamma(vv_T^\beta)^ 2+\int_{\mathbb{R}^{N}}|\gamma(v)|^{\theta-2}\gamma(vv_T^{\beta})^2\right].
\end{align*}

Since $|\gamma(v)|^{\theta-2}=|\gamma(v)|^{\theta-2}\chi_{\{|\gamma(v)\leq 1\}}+|\gamma(v)|^{\theta-2}\chi_{\{|\gamma(v)> 1\}}$ and we know that $\gamma(v)\in L^p(\mathbb{R}^N)$ for all $p\geq 2$, we have $|\gamma(v)|^{\theta-2}\chi_{\{|\gamma(v)> 1\}}=:g_3\in L^{2N}(\mathbb{R}^{N})$. Thus,
\begin{align*}\gamma(vv_T^\beta)^ 2+ |\gamma(v)|^{\theta-2}\gamma(vv_T^{\beta})^2\leq (C_2+g_3)\gamma(vv_T^\beta)^2
\end{align*}
for a positive constant $C_2$ and a positive function $g_3\in L^{2N}(\mathbb{R}^{N})$ that depends neither on $T$ nor on $\beta$. Therefore,
\begin{align*}
\|vv^\beta_T\|^2&\leq C_\beta C_1\int_{\mathbb{R}^{N}}(C_2+g_3)\gamma(vv_T^\beta)^2,
\end{align*}
from what follows (when $T\to\infty$)
\begin{align*}
\|v^{\beta+1}_+\|^2&\leq C_\beta C_1\int_{\mathbb{R}^{N}}(C_2+g_3)\gamma(v^{\beta+1}_+)^2.
\end{align*}

From the inequality
\begin{align*}
\int_{\mathbb{R}^{N}}g_3\gamma(v^{\beta+1}_+)^2&\leq |g_3|_{2N}\,|\gamma(v^{1+\beta}_+)|_2\, |\gamma(v^{1+\beta}_+)|_{2^{\#}}\\
&\leq |g_3|_{2N}\left(\lambda|\gamma(v^{1+\beta}_+)|^2_2+\frac{1}{\lambda}|\gamma(v^{1+\beta}_+)|^2_{2^{\#}}\right),
\end{align*}
we conclude that
\begin{align}\label{esth1}
|\gamma(v^{1+\beta}_+)|^2_{2^{\#}}&\leq C^2_{2^{\#}} \|v^{\beta+1}_+\|^2\nonumber\\
&\leq C^2_{2^{\#}}C_\beta C_1\left(C_2+\lambda\,|g_3|_{2N}\right)|\gamma(v^{1+\beta}_+)|^2_2\nonumber\\
&\quad+\frac{C^2_{2^{\#}}C_\beta C_1\,|g_3|_{2N}}{\lambda}|\gamma(v^{1+\beta}_+)|^2_{2^{\#}}
\end{align}
and, by taking $\lambda>0$ so that
\[\frac{C^2_{2^{\#}}C_\beta C_1\,|g_3|_{2N}}{\lambda}<\frac{1}{2},\]
we obtain
\begin{align}\label{fest}
|\gamma(v^{1+\beta}_+)|^2_{2^{\#}}&\leq C_\beta\left(2C^2_{2^{\#}}C_2+2C^2_{2^{\#}}\lambda\,|g_3|_{2N}\right)|\gamma(v^{1+\beta}_+)|^2_2\nonumber\\
&\leq C_3C_{\beta}|\gamma(v^{1+\beta}_+)|^2_2.
\end{align}

Since
\[C_3C_\beta\leq C_3(m^{-2}+1+\beta)\leq M^2e^{2\sqrt{1+\beta}}\]
for a positive constant $M$, it follows from \eqref{fest} that
\begin{align*}
|\gamma(v_+)|_{2^{\#}(1+\beta)}&\leq M^{1/(1+\beta)}e^{1/\sqrt{1+\beta}}|\gamma(v_+)|_{2(1+\beta)}.
\end{align*}

We now apply an iteration argument, taking $2(1+\beta_{n+1})=2^{\#}(1+\beta_n)$ and starting with $\beta_0=0$. This produces
\[|\gamma(v_+)|_{2^{\#}(1+\beta_n)}\leq M^{1/(1+\beta_n)}e^{1/\sqrt{1+\beta_n}}|\gamma(v_+)|_{2(1+\beta_n)}.\]
Because $(1+\beta_n)=\left(\frac{2^{\#}}{2}\right)^n=\left(\frac{N}{N-1}\right)^n$,
we have
\[\sum_{i=0}^\infty \frac{1}{1+\beta_n}<\infty\qquad\textrm{and}\qquad \sum_{i=0}^\infty\frac{1}{\sqrt{1+\beta_n}}<\infty.\]

Thus,
\[|\gamma(v_+)|_\infty=\lim_{n\to\infty}|\gamma(v_+)|_{2^{\#}(1+\beta_n)}<\infty,\]
from what follows $|\gamma(v_+)|_p<\infty$ for all $p\in [2,\infty]$. The same argument applies to $\gamma(v_-)$, proving that $\gamma(v)\in L^p(\mathbb{R}^{N})$ for all $p\in [2,\infty]$.

By taking $\lambda=1$ and $|\gamma(v^{1+\beta}_+)|_{p}<C_4$ for all $p$ in \eqref{esth1}, we obtain for any $\beta>0$,
\begin{align}\label{final0}
\|v^{\beta+1}_+\|^2
&\leq C_\beta\left(C_3+|g_3|_{2N}\right)C^{2}_4+C_\beta\,|g_3|_{2N}C^{2}_5.
\end{align}

But $\|v_+\|^{1+\beta}_{2^*(1+\beta)}=\|v_+^{1+\beta}\|_{2^*}\leq C_{2^*}\|v_+^{1+\beta}\|$ and for a positive constant $\tilde{c}$ results from \eqref{final0}  that
\[\|v_+\|^{2(1+\beta)}_{2^*(1+\beta)}\leq \tilde{c}C_\beta C^{2(1+\beta)}_4.\]
Thus,
\[\|v_+\|_{2^*(1+\beta)}\leq \tilde{c}^{1/2(1+\beta)}C_\beta^{1/2(1+\beta)}C_4 \]
and the right-hand side of the last inequality is uniformly bounded for all $\beta>0$. We are done.
$\hfill\Box$\end{proof}

We now state a result obtained by Coti Zelati and Nolasco \cite[Proposition 3.9]{ZelatiNolasco}:
\begin{proposition}\label{regZN} Suppose that $v\in H^1(\mathbb{R}^{N+1}_+)\cap L^\infty(\mathbb{R}^{N+1}_+)$ is a weak solution of
\begin{equation}\label{C}\left\{\begin{aligned}
-\Delta v +m^2v&=0, &&\mbox{in} \ \mathbb{R}^{N+1}_+,\\
-\displaystyle\frac{\partial v}{\partial x}(0,y)&=h(y) &&\mbox{for all} \ y\in\mathbb{R}^{N},\end{aligned}\right.\end{equation}
where $h\in L^p(\mathbb{R}^{N})$ for all $p\in [2,\infty]$.

Then $v\in C^{\alpha}([0,\infty)\times\mathbb{R}^{N})\cap W^{1,q}((0,R)\times\mathbb{R}^{N})$ for all $q\in [2,\infty)$ and $R>0$.

In addition, if $h\in C^\alpha(\mathbb{R}^N)$, then $v\in C^{1,\alpha}([0,\infty)\times\mathbb{R}^N)\cap C^2(\mathbb{R}^{N+1}_+)$ is a classical solution of \textup{\eqref{C}}.
\end{proposition}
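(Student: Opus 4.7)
The strategy combines interior elliptic regularity with boundary estimates for the Neumann problem on the half-space, obtained from an explicit Poisson-type representation. Since $-\Delta v + m^2 v = 0$ is a constant-coefficient elliptic equation on $\mathbb{R}^{N+1}_+$, standard interior theory (Weyl's lemma followed by bootstrap) immediately gives $v \in C^\infty(\mathbb{R}^{N+1}_+)$, and in particular $v \in C^2(\mathbb{R}^{N+1}_+)$. The substance of the proof therefore lies in the regularity up to the boundary $\{x = 0\}$.

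For the boundary analysis, I would extend $v$ by even reflection across $\{x=0\}$. Denoting $\tilde v(x,y)=v(|x|,y)$, the jump of $\partial_x \tilde v$ at $\{x=0\}$ equals $-2 h(y)$, so in the distributional sense on $\mathbb{R}^{N+1}$ one has
\[-\Delta \tilde v + m^2 \tilde v = 2\, h(y)\, \delta_{\{x=0\}}.\]
Uniqueness in $H^1(\mathbb{R}^{N+1})$ (from coercivity of $-\Delta + m^2$) then yields, for $x > 0$,
\[v(x,y) = 2\int_{\mathbb{R}^N} G_m(x, y-z)\, h(z)\, \dd z,\]
where $G_m$ is the Bessel kernel of $-\Delta + m^2$ on $\mathbb{R}^{N+1}$: smooth off the origin, decaying exponentially at infinity, and singular of order $|X|^{-(N-1)}$ at $X = 0$. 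Young's inequality and the hypothesis $h \in L^p(\mathbb{R}^N)$ for every $p \in [2,\infty]$ give $v \in L^q((0,R)\times\mathbb{R}^N)$ for every $q \in [2,\infty)$, while Calder\'on-Zygmund estimates on the singular operators $\partial_i G_m \ast \cdot$ upgrade this to $v \in W^{1,q}((0,R)\times\mathbb{R}^N)$ for the same range.

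Having $v \in W^{1,q}$ on bounded slabs for every $q < \infty$, Morrey's embedding delivers $v \in C^\alpha([0,\infty)\times\mathbb{R}^N)$ with $\alpha = 1 - (N+1)/q$ for $q > N+1$. For the second part, when $h \in C^\alpha(\mathbb{R}^N)$, classical Schauder estimates for the Neumann problem on the half-space (or a direct H\"older analysis of $G_m \ast h$) lift the regularity to $v \in C^{1,\alpha}([0,\infty)\times\mathbb{R}^N)$. Combined with interior $C^\infty$ smoothness, this renders $v$ a classical solution: the PDE is satisfied pointwise in $\mathbb{R}^{N+1}_+$ and the Neumann condition pointwise on $\{x = 0\}$.

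The main obstacle is the $W^{1,q}$ boundary estimate: differentiating the convolution representation encounters the $|X|^{-(N-1)}$ singularity of $G_m$, forcing a careful split into a singular piece (handled by Calder\'on-Zygmund theory on the $N$-dimensional boundary trace of $G_m$) and an exponentially-decaying smooth tail (handled by Young's inequality). This delicate kernel analysis is precisely what is carried out in \cite[Proposition 3.9]{ZelatiNolasco}, from which the result is quoted.
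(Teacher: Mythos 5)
Your outline is correct, but it follows a different route from the one behind the paper's statement. The paper does not prove Proposition \ref{regZN} at all: it quotes it from \cite[Proposition 3.9]{ZelatiNolasco}, and in the proof of Theorem \ref{classical} it recalls the device used there, namely the antiderivative $\rho(x,y)=\int_0^x v(t,y)\,\dd t$ together with the \emph{odd} extension of $\rho$ and of $h$ across $\{x=0\}$. A direct computation from $-\Delta v+m^2v=0$ and $-\partial_x v(0,y)=h(y)$ shows that $\rho$ solves $-\Delta\rho+m^2\rho=h$ in all of $\mathbb{R}^{N+1}$ with a right-hand side that is an $L^p$ \emph{function}, so classical Calder\'on--Zygmund $W^{2,p}$ estimates plus Sobolev embedding give $\rho\in C^{1,\alpha}$, hence $v=\partial_x\rho\in C^\alpha$ up to the boundary, and Schauder theory applied to the same equation upgrades this to $v\in C^{1,\alpha}$ when $h\in C^\alpha$. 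Your approach instead reflects $v$ \emph{evenly}, which produces the measure-valued right-hand side $2h\,\delta_{\{x=0\}}$, and then analyzes the Bessel-kernel representation $v(x,y)=2\int_{\mathbb{R}^N}G_m(x,y-z)h(z)\,\dd z$ directly. This is legitimate: since $h\in L^2(\mathbb{R}^N)$, the surface measure lies in $H^{-1}(\mathbb{R}^{N+1})$, so the Lax--Milgram uniqueness step you invoke is justified, and your jump computation is right. What the antiderivative trick buys is precisely the avoidance of the singular-kernel analysis you flag as the main obstacle: once the Neumann datum is absorbed into a bulk $L^p$ source, only textbook $W^{2,p}$ and Schauder estimates are needed. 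Conversely, your representation formula is more explicit and, incidentally, the $W^{1,q}$ slab bound can be obtained from it without Calder\'on--Zygmund theory, since $\int_{\mathbb{R}^N}|\nabla G_m(x,y)|\,\dd y\le C\log(1/x)$ for small $x>0$, and Young's inequality in $y$ followed by integration of $(\log(1/x))^q$ over $(0,R)$ already closes the estimate; so the "delicate" step you defer back to \cite{ZelatiNolasco} is softer than you suggest. Both routes, suitably completed, prove the proposition.
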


\noindent{\textit{Proof of Theorem \ref{classical}.} In the proof of Proposition \ref{regZN} (see \cite[Proposition 3.9]{ZelatiNolasco}), defining
	\[\rho(x,y)=\int_0^x v(t,y)\dd t,\]
	taking the odd extension of $h$ and $\rho$ to the whole $\mathbb{R}^{N+1}$ (which we still denote simply by $h$ and $\rho$), in \cite{ZelatiNolasco} is obtained that $\rho$ satisfies the equation
	\begin{equation}\label{rho}-\Delta\rho+m^2\rho=h\quad\text{in }\ \mathbb{R}^{N+1}\end{equation}
	and $\rho\in C^{1,\alpha}(\mathbb{R}^{N+1})$ for all $\alpha\in(0,1)$ by applying Sobolev's embedding. Therefore, $v(x,y)=\frac{\partial \rho}{\partial x}(x,y)\in C^\alpha(\mathbb{R}^{N})$.
	
In our case
\[h(y)=f\left(v(0,y)\right).\]

We now rewrite equation \eqref{rho} as
\[-\Delta \rho+m^2\rho=f\left(\frac{\partial \rho}{\partial x}(0,y)\right).\]
Since $f\in C^1$ and $\frac{\partial \rho}{\partial x}(x,y)$ is bounded, the right-hand side of the last equality belongs to $C^\alpha(\mathbb{R}^{N+1})$. Thus, classical elliptic boundary regularity yields
 \[\rho\in C^{2}(\mathbb{R}^{N+1})\quad\Rightarrow\quad v\in C^{1,\alpha}(\mathbb{R}^{N+1}_+).\]
Hence, by applying classical interior elliptic regularity directly to $v$, we deduce that $v\in C^{1,\alpha}(\mathbb{R}^{N+1}_+)\cap C^{2}(\mathbb{R}^{N+1}_+)$ is a classical solution of problem \eqref{P}. $\hfill\Box$

\section{Proof of Theorem \ref{t3}}
Let us consider a critical point $v \in H^{1}(\mathbb{R}^{N+1}_{+})$ of $J$. Then
\[\left\{\begin{array}{ll}
-\Delta v+m^2v=0 &\text{in}\ \mathbb{R}^{N+1}_+\\
v(0,y)= \gamma(v) \in L^{2}(\mathbb{R}^{N}), &y\in \mathbb{R}^{N}=\partial \mathbb{R}^{N+1}_+.
\end{array}\right.\]
	
Considering the Fourier transform with respect to the variable $y \in \mathbb{R}^{N}$ we obtain
\[\mathcal{F}(v)(x, \xi) = e^{\sqrt{\vert 2 \pi \xi \vert^{2} + m^2x}} \mathcal{F}(\gamma(v)))(\xi)
\]
and hence 
\[\displaystyle\sup_{y \in \mathbb{R}^{N}} \vert v(x,y) \vert \leq C\vert \gamma (v) \vert_{2}e^{-mx}.
\]
	
Since we have $\gamma(u) \in L^{q}(\mathbb{R}^{N})$ for any $q \in [2,\infty)$ as a consequence of Proposition \ref{t2},  we have that $u(x,y) \to 0$ as $\vert y \vert \to \infty$ for any $x$ and conclude that $u(x,y)e^{\lambda x} \to 0$, as $x + \vert y \vert \to \infty$, for any $0< \lambda < m$.

%
%

\textit{Proof of Theorem \ref{t3}.} Applying the strong maximum principle and Hopf lemma, we have $u(x,y) > 0$ for any $(x,y) \in \mathbb{R}^{N+1}_{+}$.
	
For $R>0$ let us define
\begin{align*}
B_{R}^{+} &= \{(x,y) \in \mathbb{R}^{N+1}_{+}; \,\, \sqrt{x^2 +\vert y \vert^{2}} < R\}\\
\Omega_{R}^{+} &= \{(x,y) \in \mathbb{R}^{N+1}_{+}; \,\, \sqrt{x^2 +\vert y \vert^{2}} > R\}\\
\Gamma_{R} &= \{(0,y) \in \mathbb{R}^{N+1}_{+}; \,\,  \vert y \vert \geq R \}\\
\end{align*}
and the auxiliary function
\[f_R(x,y) = C_{R}e^{-\alpha x} e^{-(m-\alpha)\sqrt{x^2 + \vert y \vert^2}}, \,\, \, \text{for} \, \, (x,y) \in \Omega_{R}^{+}
\]
with $0\leq \alpha < m$ and $C_{R}>0$ a constant to be fixed later.
	
We have
\begin{align*}
-\Delta f_{R} + m^2f_{R} &= \left[\alpha^{2} + m^2 + (m-\alpha)^{2} + \frac{N(m-\alpha)}{\sqrt{x^2 + \vert y \vert^2}} - \frac{2\alpha(m-\alpha)x}{\sqrt{x^2 + \vert y \vert^2}} \right]f_{R} \\
&\geq \frac{N(m-\alpha)}{\sqrt{x^2 + \vert y \vert^2}}f_{R} \geq 0\,\,\, \text{in} \,\,\, \Omega_{R}^{+}.
\end{align*}
	
	Moreover, 
\[\frac{\partial f_{R}}{\partial \eta}(0,y) = -\frac{\partial f_{R}}{\partial x}(0,y) = \alpha f_{R}(0,y) \, \,\, \text{in} \,\,\, \Gamma_{R}.
\]
	
Let us define $w(x,y) = f_{R}(x,y) - v(x,y)$, for $(x,y) \in \overline{\Omega}_{R}^{+}$. Thus,
\begin{align*}
-\Delta w + m^{2} w &= -\Delta f_{R} + m^2f_{R} - \displaystyle\underbrace{-\Delta u + m^2 u}_{=0} \\
&= -\Delta f_{R} + m^2f_{R} \geq 0 \,\,\, \text{in} \,\,\, \overline{\Omega}_{R}^{+}
\end{align*}
and choosing $C_{R} = e^{mR} \displaystyle\max_{\partial B_{R}^{+}} u$ we get for $(x,y) \in \partial B_{R}^{+}$,
\[w(x,y)= f_{R}(x,y) - v(x,y)  \,\,\geq \,\, \displaystyle\max_{\partial B_{R}^{+}} u - v(x,y) \geq 0, \quad \quad \forall (x,y) \in \partial B_{R}^{+}.
\]
	
In addition, since we already know that $u(x,y) \to 0$ when  $x + \vert y \vert \to \infty$ and the same is true for $f_R(x,y)$, we conclude that $w(x,y) \to 0$ as $x + \vert y \vert \to \infty$.

\noindent\textbf{Claim.} We have $w(x,y) \geq 0$ in $\Omega_{R}^{+}$.
		
In fact, suppose that
\[\displaystyle\inf_{\overline{\Omega}_{R}^{+}} w < 0.
\]
	
By the strong maximum principle, there exists $(0,y_0) \in \Gamma_{R}$ such that
\[w(0,y_0) = \displaystyle\inf_{\overline{\Omega}_{R}^{+}} w \leq w(x,y), \,\,\,\,\, \forall (x,y) \in \Omega_{R}^{+}.
\]
	
Now, we define $W(x,y) = w(x,y)e^{\lambda x}$, with $\lambda \in (0,m)$. Thus,
\[W(x,y) =C_{R}e^{(\lambda - \alpha)x}e^{-(m-\alpha)\sqrt{x^2 + \vert y \vert^2}} - e^{\lambda x} v(x,y)
\]
and, as before, we have $W(x,y) \to 0$ as $x + \vert y \vert \to \infty$ and $W(x,y) \geq 0$ in $\partial B_{R}^{+}$.
	
Note that,
\[-\Delta w + m^2 w = e^{-\lambda x}\left(-\Delta W + 2 \lambda \frac{\partial W}{\partial x} + (m^2 - \lambda^2) W\right),	
\]
thus yielding $-\Delta W + 2 \lambda \frac{\partial W}{\partial x} + (m^2 - \lambda^2) W \geq 0$ on $\Omega_{R}^{+}$.
	
By the strong maximum principle,
\[\displaystyle\inf_{\Gamma_{R}} = \displaystyle\inf_{\overline{\Omega}_{R}^{+}} < W(x,y) , \,\,\,\,\,\, (x,y) \in \Omega_{R}^{+}.
\]
	
Therefore,
\[W(0,y_0) = \displaystyle\inf_{\Gamma} W = \displaystyle\inf_{\Gamma} w = w(0,y_0) < 0.
\]
	
It follows from Hopf's lemma that
\begin{align}\label{Hopfl}
-\frac{\partial W}{\partial x}(0,y_0) = \frac{\partial W}{\partial \eta}(0,y_0) < 0.
\end{align}

But
\begin{align*}
-\frac{\partial W}{\partial x}(0,y_0) &=  -\frac{\partial w}{\partial x}(0,y_0) - \lambda w(0,y_0) \\
&= -\frac{\partial f_R}{\partial x}(0,y_0) + \frac{\partial u}{\partial x}(0,y_0) - \lambda f_{R}(0,y_0) + \lambda v(0,y_0) \\
&= \lambda v(0,y_0) - f(v(0,y_0)) + (\alpha - \lambda) f_{R}(0,y_0).
\end{align*}
	
Since $\vert y_0 \vert \to \infty$ as $R \to \infty$ and $v(0,y) \to 0$ as $\vert y \vert \to \infty$, follow from $(f_1)$ and $(f_2)$ that $f(v(0,y_0)) \to 0$ as $R \to \infty$. Thus, for any $0 < \lambda < \alpha < m$ and $R$ large enough we have
\[-\frac{\partial W}{\partial x}(0,y_0) \lambda v(0,y_0) - f(v(0,y_0)) + (\alpha - \lambda) f_{R}(0,y_0) > 0
\]
a contradiction with \eqref{Hopfl}.
	
Therefore, $w(x,y) \geq 0$ in $\overline{\Omega}_{R}^{+}$ and thus
\begin{align*}
0 < v(x,y)&= f_{R}(x,y) - w(x,y)\\
&\leq f_{R}(x,y) = C_{R}e^{-\alpha x}e^{-(m-\alpha)\sqrt{x^2 + \vert y\vert^2}}
\end{align*}
for all $(x,y) \in \overline{\Omega}_{R}{+}$.
	
In particular, for $\delta = m-\alpha > 0$ we finally obtain
\[0<v(0,y) \leq Ce^{-\delta \vert y \vert}, \,\, \text{for any} \,\, \vert y \vert \geq R,
\]
and we are done.
$\hfill\Box$

\section{Radial solution}
In this section we will prove that two different problems have radially symmetric solutions. The proof of our results adapt ideas of Choi and Seok \cite[Proposition 4.2]{ChoiSeok}. We initially  consider the problem \eqref{P}.


\textit{Proof of Theorem \ref{t4}.} By applying Theorems \ref{classical} and \ref{t3}, any solution $v$ of \eqref{P} is regular and satisfies
	\[\lim_{x+|y|\to \infty}v(x,y)=0.\]
	
We now apply the moving planes method together with the maximum principle. For any $\lambda>0$ we define
\begin{align*}
R_\lambda&=\{(x,y)=(x,y_1,\ldots,y_n)\in \mathbb{R}^{N+1}_+\,:\,x\geq 0,\,y_1>\lambda \},\\
\Sigma_\lambda&=\{y\in\mathbb{R}^N\,:\, (x,y)\in R_\lambda\},\\
y^\lambda&=(2\lambda-y_1,y_2,\ldots,y_n),\\
v_\lambda(x,y)&=v(x,y^\lambda).
\end{align*}
	
Note that $\Sigma_\lambda$ is the projection of $R_\lambda$ on $\mathbb{R}^N$. Denoting $w_\lambda=v_\lambda- v$, we have
\begin{equation}\label{rad2}
\left\{\begin{array}{rcll}
-\Delta w_\lambda+m^2 w_\lambda&=&0, &\text{in } \mathbb{R}^{N+1}_+\\
\displaystyle-\frac{\partial w_\lambda}{\partial x}(0,y)&=&f(v_\lambda(0,y))- f(v(0,y)), &y\in\mathbb{R}^N.
\end{array}\right.
\end{equation}

\noindent\textit{Claim.}
For $\lambda>0$ large enough, we have $w_{\lambda} \geq 0$ in $R_{\lambda}$.

In fact, define $w_{\lambda}^{-} = \min\{0,w_{\lambda}\}$ and consider
\begin{equation*}
c^\lambda(y)=\left\{\begin{array}{ll}\displaystyle\frac{f(v_\lambda(0,y))-f(v(0,y))}{v_\lambda(0,y)-v(0,y)},\ &\text{if }\ v_\lambda(0,y)\neq v(0,y),\\
0,\ &\text{if }\ v_\lambda(0,y)= v(0,y),\end{array}\right.
\end{equation*}

As a consequence of ($f_1$) we have $f'(t) \to 0$ as $t \to 0$ and thus $c^{\lambda}(y)$ converges to $0$ uniformly on $R_{\lambda}$ when $\lambda\to\infty$.

Taking $w^{-}_\lambda$ as a test-function in \eqref{rad2} yields
\begin{equation}\label{Iden1}
\iint_{\mathbb{R}^{N+1}_{+}}\left[|\nabla w^{-}_\lambda|^2+m^2|w^{-}_\lambda|^2\right]= \int_{\mathbb{R}^{N}} \left(f(v_{\lambda}(0,y)) - f(v(0,y)) \right)w^{-}_{\lambda} \dd y.
\end{equation}
	
Now, observe that the change variable $y \to y^{\lambda}$ yields
\begin{align*}
\iint_{\mathbb{R}^{N+1}_{+}}\left[|\nabla w^{-}_\lambda|^2+m^2|w^{-}_\lambda|^2\right]= 2 \iint_{R_\lambda}\left[|\nabla w^{-}_\lambda|^2+m^2|w^{-}_\lambda|^2\right]
\end{align*}
and also
\begin{align*}
\int_{\mathbb{R}^{N}} \left(f(v_{\lambda}(0,y)) - f(v(0,y)) \right)w^{-}_{\lambda} \dd y &=  \int_{\mathbb{R}^{N}}c^{\lambda}(y) \vert w^{-}_{\lambda}(0,y)\vert^{2}  \dd y \nonumber\\
&= 2\int_{\Sigma_{\lambda}} c^{\lambda}(y) \vert w^{-}_{\lambda}(0,y)\vert^{2}  \dd y.
\end{align*}
	
Substituting the last two equalities 
in \eqref{Iden1} we obtain
\begin{align}\label{rlambdasigmalambda}
\iint_{R_\lambda}\left[|\nabla w^{-}_\lambda|^2+m^2|w^{-}_\lambda|^2\right]&= \int_{\Sigma_{\lambda}} c^{\lambda}(y) \vert w^{-}_{\lambda}(0,y)\vert^{2}  \dd y.
\end{align}
	
Since $ c^{\lambda}(y) \to 0$ uniformly in $\Sigma_{\lambda}$ when $\lambda\to\infty$, it follows that, for $\lambda$ large enough, $\vert c~{\lambda}(y) \vert \leq \varepsilon$ for any $\varepsilon > 0$. Thus, it follows from Lemma \ref{lemmameas} that
\begin{align*}
\iint_{R_\lambda}\left[|\nabla w^{-}_\lambda|^2+m^2|w^{-}_\lambda|^2\right] &\leq \varepsilon\int_{\Sigma_{\lambda}} \vert w^{-}_{\lambda}(0,y)\vert^{2}  \dd y\\
&\leq \varepsilon \iint_{\Sigma_\lambda \times [0,\infty)}\left[|\nabla w^{-}_\lambda|^2+m^2|w^{-}_\lambda|^2\right]\\
&= \varepsilon \iint_{R_\lambda}\left[|\nabla w^{-}_\lambda|^2+m^2|w^{-}_\lambda|^2\right],
\end{align*}
allowing us to conclude that $w^{-}_{\lambda} = 0$ in $R_{\lambda}$ for $\lambda$ large enough, that is, $w_{\lambda} \geq 0$ in $R_{\lambda}$, proving our claim.
	
Now we define
\[\nu=\inf\{\sigma>0\,:\,w_\lambda\geq 0\ \text{on }\ R_\lambda,\ \forall  \lambda  \geq \sigma\}.\]

We start considering the case $\nu>0$ and claim that, in this case, we have $w_\nu\equiv 0$ on $R_\lambda$. If not, it follows from the continuity of $w_{\nu}$ and the strong maximum principle that $w_\nu>0$ on the set
\[R'_\nu=\{(x,y)=(x,y_1,\ldots,y_n)\,:\, y_1>\nu,\ x>0\}.\] 
	
We now assert that $w_\nu > 0$ on the set $\{y\in\mathbb{R}^N\,:\,y_1>\nu\}$. Otherwise, there exists $\bar{y}\in\mathbb{R}^N$ such that $w_\nu(\bar{y})=0$, with its first coordinate greater than $\nu$ . By the Hopf lemma we have $-\frac{\partial}{\partial x}w_\nu(0,\bar{y})>0$ and we have reached a contradiction, since $v_\nu(0,\bar{y})=v(0,\bar{y})$ and
\[-\frac{\partial}{\partial x}w_\nu(0,\bar{y})= f(v_{\nu}(0,\bar{y})) - f(v(0,\nu)) = 0.
\]
Thus $w_\nu > 0$ on the set $\{y\in\mathbb{R}^N\,:\,y_1>\nu\}$.	

In order to reach a contradiction with the definition of $\nu$ if $\nu>0$, consider a sequence $\lambda_j<\nu$ such that $\lambda_j\to\nu$ when $j\to\infty$. Since $c^{\lambda_{j}} \to 0$ uniformly in $\Sigma_{\lambda}$ for $\lambda$ large enough, we have that $\vert c^{\lambda_{j}}(y) \vert \leq \varepsilon < \frac{1}{\gamma_0}$, for a positive constant $\varepsilon$ and any $\vert y \vert > r_0>0$. 

Let $D = \Vert c_{\lambda_j} \Vert_{L^{\infty}(\mathbb{R}^{N})} < \infty$ and $B_{r_0}(p_j)$ be the open ball with center $p_0=(\lambda_j,0,\ldots,0)\in\mathbb{R^N}$ and radius $r_0>0$. Then, according to \eqref{rlambdasigmalambda} and Lemma \ref{lemmameas},
\[\iint_{R_{\lambda_j}}\left[|\nabla w^{-}_{\lambda_j}|^2+ m^2|w^{-}_{\lambda_j}|^2\right]\]
\begin{align*}
&= \int_{\Sigma_{\lambda_j}\cap B_{r_0}(p_j)}c^{\lambda_j}(y)|w^{-}_{\lambda_j}(0,y)|^2\dd y +	\int_{\Sigma_{\lambda_j}\setminus B_{r_0}(p_j)}c^{\lambda_j}(y)|w^{-}_{\lambda_j}(0,y)|^2\dd y \\
&\leq  D  \int_{\Sigma_{\lambda_j}\cap B_{r_0}(p_j)}|w^{-}_{\lambda_j}(0,y)|^2\dd y + \varepsilon\int_{\Sigma_{\lambda_j}\setminus B_{r_0}(p_j)}|w^{-}_{\lambda_j}(0,y)|^2\dd y \\
&\leq D  \int_{\Sigma_{\lambda_j}\cap B_{r_0}(p_j)}|w^{-}_{\lambda_j}(0,y)|^2\dd y + \varepsilon \iint_{R_{\lambda_j}}\left[|\nabla w^{-}_{\lambda_j}|^2+m^2|w^{-}_{\lambda_j}|^2\right],
\end{align*}
and we conclude that
\begin{equation*}
\iint_{R_{\lambda_j}}\left[|\nabla w^{-}_{\lambda_j}|^2+m^2|w^{-}_{\lambda_j}|^2\right] \leq C \int_{\Sigma_{\lambda_j}\cap B_{r_0}(p_j)}|w^{-}_{\lambda_j}(0,y)|^2\dd y.
\end{equation*}
	
Denote by $E_{j}$ the set $\textrm{supp}\,w^{-}_{\lambda_{j}}(0,y)$ in $B_{r_0}(p_j)$. Since $w^{-}_{\nu} >0$ in $\Sigma_{\nu}$ and $\lambda_{j} \to \nu$, the continuity of $w_{\nu}$ yields that $\vert E_{j} \vert$ converges to $0$ as $j \to \infty$, since $w^{-}_{\lambda_{j}}(0,y) \to w^{-}_{\nu}(0,y) = 0$ in $\Sigma_{\nu}$. Thus, the dominated convergence theorem and H\"{o}lder's inequality imply that
\begin{align}\label{Iden7}\int_{\Sigma_{\lambda_j}\cap B_{r_0}(p_j)}|w^{-}_{\lambda_j}(0,y)|^2\dd y&=\int_{\Sigma_{\lambda_{j}}} \chi_{E_j}(y)\vert w^{-}_{\lambda_{j}}(0,y) \vert^{2} \dd y \nonumber\\
&= \vert E_{j} \vert^{\frac{1}{N}} \Vert w^{-}_{\lambda_{j}} \Vert_{L^{\frac{2N}{N-1}}(\Sigma_{\lambda_{j}})}\\
&\leq \vert E_{j} \vert^{\frac{1}{N}} \iint_{R_{\lambda_{j}}} \vert \nabla w^{-}_{\lambda_{j}}\vert^{2}\nonumber
\end{align}
	
Consequently, $w^{-}(x,y) = 0$ in $R_{\lambda_{j}}$, that is,
$w_{\lambda_{j}} \geq 0$ in $R_{\lambda_{j}}$,
contradicting the  definition of $\nu$. Thus, obtain $w_{\nu} = 0$ in $R_{\nu}$ and we obtain  the symmetry in the $y_1$ direction with respect to $y_1=\nu$.
	
If $\nu = 0$, we repeat the previous arguments for $\lambda < 0$ and $w_{\lambda} = v_{\lambda} - v$ defined on
\begin{equation*}
Q_{\lambda} = \left\{(x,y)=(x,y_1,\ldots,y_n)\in \mathbb{R}^{N+1}_+\,:\,y_1 <\lambda,\, x\geq 0\right\}.
\end{equation*}

Thus, as before, we conclude that $w_{\lambda} \geq 0$ when  $\lambda\to-\infty$. Define
\[\nu'=\sup\{\sigma<0\,:\,w_\lambda\geq 0\ \text{on }\ Q_\lambda,\ \forall  \lambda  \leq \sigma\}.\]
	
If $\nu < 0$, the preceding discussion applies and we obtain the symmetry with respect to $y_1=\nu'$. If $\nu' = 0$, we have
\begin{equation}\label{Iden4}
v(x,-y_1,y_2,\cdots,y_N) \leq v(x,y_1,\cdots,y_N) \quad \text{in} \quad Q_{0},
\end{equation}
and since $\nu = 0$ we have also
\begin{equation}\label{Iden5}
v(x,-y_1,y_2,\cdots,y_N) \leq v(x,y_1,\cdots,y_N) \quad \text{in} \quad R_{0}.
\end{equation}
	
From \eqref{Iden4} and \eqref{Iden5} follows that 
\begin{equation}\label{Iden6}
v(x,-y_1,y_2,\cdots,y_N) \leq v(x,y_1,\cdots,y_N) \quad \text{in} \quad \mathbb{R}^{N+1}_{+}.
\end{equation}
Consequently, replacing $y_1$ by $-y_1$ in \eqref{Iden6}, we obtain the symmetry with respect to $y_1=0$:
\begin{equation*}
v(x,-y_1,y_2,\cdots,y_N) = v(x,y_1,\cdots,y_N) \quad \text{in} \quad \mathbb{R}^{N+1}_{+}.
\end{equation*}

To conclude the proof we apply the same procedure with respect to the other directions $y_i$, for $i=2,...,N$.  
$\hfill\Box$\vspace*{.3cm}

We now consider the problem \eqref{Hartree}.
For $y\in\mathbb{R}^N$ we denote 
\[g(y)=\int_{\mathbb{R}^N}\frac{1}{|y-z|^{N-\alpha}}F(u(0,z))\dd z.\]


\textit{Proof of Theorem \ref{t5}.}
We maintain the notation introduced in the proof of Theorem \ref{t4} and define $g_\lambda(y)=g(y^\lambda)$. Observe that, as before, any solution of \eqref{Hartree} is regular and satisfies
	\[\lim_{x+|y|\to \infty}v(x,y)=0,\]
see e.g. \cite{BBMP}.
	
We now apply the moving planes method in integral form. 
As before, $w_\lambda$ stands for $v_\lambda-v$. Then we have, for $y\in\mathbb{R}^N$,
	\begin{equation}\label{pradsv3}
	\left\{\begin{array}{rcl}
	-\Delta w_\lambda+m^2 w_\lambda&=&0,\qquad \text{in } \mathbb{R}^{N+1}_+\\ 
	\displaystyle-\frac{\partial w_\lambda}{\partial x}(0,y)&=&-V_\infty w_\lambda(0,y)+g_\lambda(y)f(v_\lambda(0,y))-g(y)f(v(0,y)). 
	\end{array}\right.
	\end{equation}
	
We claim that $w_\lambda\leq 0$ on $R_\lambda$ for $\lambda$ large enough. To prove our claim, we define $w^+_\lambda(x,y)=\max\{0,w_\lambda\}$ and consider
\begin{align*}
c^\lambda_1(x,y)=\left\{\begin{array}{ll}\displaystyle\frac{f(v_\lambda)-f(v)}{v_\lambda-v},\ &\text{if }\ v_\lambda\neq v,\\
0,\ &\text{if }\ v_\lambda= v,\end{array}\right.\\
\intertext{and}
c^\lambda_2(x,y)=\left\{\begin{array}{ll}\displaystyle\frac{F(v_\lambda)-F(v)}{v_\lambda-v} &\text{if } v_\lambda\neq v,\\
0,\ &\text{if }\ v_\lambda= v.
\end{array}\right.
\end{align*}
	
Observe that, when $\lambda\to\infty$, both $c^\lambda_1(x,y)$ and $c^\lambda_2(x,y)$ converge to $0$ uniformly on $R_\lambda$.
	
Taking $w^+_\lambda$ as a test-function in \eqref{Hartree}, the same argument applied to obtain \eqref{rlambdasigmalambda} yields
\[\iint_{R_\lambda}\left[|\nabla w^+_\lambda|^2+m^2|w^+_\lambda|^2\right]+V_\infty\int_{\Sigma_\lambda} w^+_\lambda(0,y)\dd y\]
	\begin{align}\label{c2}&=\int_{\Sigma_\lambda}\left[g_\lambda(y)f(v_\lambda(0,y))-g(y)f(v(0,y))\right]w^+_\lambda(0,y)\dd y\nonumber\\
	&=\int_{\Sigma_\lambda}g_\lambda(y)c^\lambda_1(x,y)|w^+_\lambda(0,y)|^2\dd y+\int_{\Sigma_\lambda}[g_\lambda(y)-g(y)]f(v(0,y))w^+_\lambda(0,y)\dd y\\
	&=I_1+I_2,\nonumber
	\end{align}
	respectively. 
	
	We now consider the integral $I_1$ in right-hand side of \eqref{c2}. Since $g\in L^\infty(\mathbb{R}^N)$ and $c^\lambda_1\to 0$ uniformly when $\lambda\to\infty$, we have $c^\lambda_1(0,y)g_\lambda(y)=\epsilon(\lambda)$, where $\epsilon(\lambda)\to 0$ when $\lambda$ is large enough. Thus
	\begin{equation}\label{c4}\int_{\Sigma_\lambda}g_\lambda(y)c^\lambda_1(x,y)|w^+_\lambda(0,y)|^2\dd y=\epsilon(\lambda)\int_{\Sigma_\lambda}|w^+_\lambda(0,y)|^2\dd y.
	\end{equation}
	
	We now consider $I_2$. Since $F$ is increasing, we have
	\begin{subequations}
		\begin{align}
		g_\lambda(y)-g(y)&=\int_{\Sigma_\lambda}\left[\frac{1}{|y-z|^{N+\alpha}}-\frac{1}{|y-z^\lambda|^{N+\alpha}}\right][F(v_\lambda(0,z))-F(v(0,z))]\dd z\nonumber\\
		&\leq\int_{\Sigma_\lambda\cap \{w^\lambda(0,\cdot)>0\}}\frac{1}{|y-z|^{N+\alpha}}\left[F(v_\lambda(0,z)-F(v(0,z))\right]\dd z\nonumber\\
		&\quad+\int_{\Sigma_\lambda\cap \{w^\lambda(0,\cdot)<0\}}\frac{1}{|y-z|^{N+\alpha}}\left[F(v_\lambda(0,z)-F(v(0,z))\right]\dd z\label{final1}\\
		&\leq\int_{\Sigma_\lambda}\frac{1}{|y-z|^{N+\alpha}}|c^\lambda_2(0,z)|\,w^+_\lambda(0,z)\dd z,\label{final2}
		\end{align}
	\end{subequations}
since the second integral in \eqref{final1} is negative and can be ignored, while the first integral in \eqref{final1} is bounded by the integral in \eqref{final2}. So, 
	\begin{align*}\label{c5}
	I_2&\leq \int_{\Sigma_\lambda}\int_{\Sigma_\lambda}\frac{1}{|y-z|^{N+\alpha}}|c^\lambda_2(0,z)|\,|w^+_\lambda(0,z)|\,|f(v(0,y))w^+_\lambda(0,y)|\dd z\dd y\nonumber\\
	&\leq K|c
	^\lambda_2(0,\cdot)|_{L^\infty(\Sigma_\lambda)}|w^+_\lambda(0,\cdot)|_{L^2(\Sigma_\lambda)}\,|f(v(0,\cdot))w^+_\lambda(0,\cdot)|_{L^q(\Sigma_\lambda)},
	\end{align*}
	as a consequence of the Hardy-Littlewood inequality, with
	\[\frac{1}{q}=2-\frac{1}{2}-\frac{N-\alpha}{N}=\frac{1}{2}+\frac{\alpha}{N}>\frac{1}{2},\]
	the constant $K$ depending on $q$. Observe that we have $q<2$. 
	
	Thus, it follows from Hölder's inequality that 
	
	\begin{align}\label{c6}
	I_2&\leq O(\lambda)\int_{\Sigma_\lambda}|w^+_\lambda(0,y)|^2\dd y,
	\end{align}
	where 
	\[O(\lambda)=K|c^\lambda_2(0,\cdot)|_{L^\infty(\Sigma_\lambda)}|f(v(0,\cdot))|_{L^\delta(\Sigma_\lambda)}\to 0\quad\text{when }\ \lambda\to \infty\]
	and $\delta=2q/(2-q)$.
	
Returning to \eqref{c2}, we conclude that
\[\iint_{R_\lambda}\left[|\nabla w^+_\lambda|^2+m^2|w^+_\lambda|^2\right]+V_\infty\int_{\Sigma_\lambda} w^+_\lambda(0,y)\dd y\]
\begin{align}\label{c7}
&\leq \left[\epsilon(\lambda)+O(\lambda)\right]\int_{\Sigma_\lambda}|w^+_\lambda(0,y)|^2\dd y\nonumber\\
&\leq \left[\epsilon(\lambda)+O(\lambda)\right]\iint_{R_\lambda}\left[|\nabla w^+_\lambda|^2+m^2 |w^+_\lambda|^2\right],
\end{align}
as a consequence of Lemma \ref{lemmameas}. Thus, 
$w^+_\lambda\equiv 0$ on $R_\lambda$ for $\lambda$ large enough, proving our claim. 
	
Now we define
\[\nu=\inf\{\sigma>0\,:\,w_\lambda\leq 0\ \text{on }\ R_\lambda,\ \forall \sigma\leq \lambda\}.\]
	
Let us start considering the case $\nu>0$. We claim that $w_\nu\equiv 0$ on $R_\lambda$. If not, as a consequence of the strong maximum principle, we have $w_\nu<0$ on the set
\[R'_\nu=\{(x,y)=(x,y_1,\ldots,y_n)\,:\, y_1>\nu,\ x>0\},\]
since $w_\nu\leq 0$ on $R_\nu$ is valid by continuity. We assert that $w_\nu<0$ on the set $\{y\in\mathbb{R}^N\,:\,y_1>\nu\}$. Otherwise, there exists $\bar{y}\in\mathbb{R}^N$ such that $w_\nu(\bar{y})=0$, with $\bar{y}_1>\nu$. By the Hopf Lemma, we have $-\frac{\partial}{\partial x}w_\nu(0,\bar{y})>0$ and we have reached a contradiction, because
\[-\frac{\partial}{\partial x}w_\nu(0,\bar{y})=g_\lambda(\bar{y})f(v_\nu(0,\bar {y}))-g(\bar{y})f(v(0,\bar{y}))=\left[g_\nu(\bar{y})-g(\bar{y})\right]f(v_\nu(0,\bar{y}))<0,
\]
since $v_\nu(0,\bar{y})=v(0,\bar{y})$ and $1/|x-y^\lambda|\leq 1/|x-y|$.
	
In order to reach a contradiction with the definition of $\nu$, consider a sequence $\lambda_j<\nu$ such that $\lambda_j\to\nu$ when $j\to\infty$. Let $B_{r_0}(p_j)$ be the ball with center $p_0=(\lambda_j,0,\ldots,0)\in\mathbb{R^N}$ and radius $r_0>0$. Then, according to \eqref{c7},
\[\iint_{R_{\lambda_j}}\left[|\nabla w^+_{\lambda_j}|^2+m^2|w^+_{\lambda_j}|^2\right]+V_\infty\int_{\Sigma_{\lambda_j}} w^+_{\lambda_j}(0,y)\dd y
\]
\begin{align}
&\leq M\left(\int_{\Sigma_{\lambda_j}\cap B_{r_0}(p_j)}|w^+_{\lambda_j}(0,y)|^2\dd y +	\int_{\Sigma_{\lambda_j}\setminus B_{r_0}(p_j)}|w^+_{\lambda_j}(0,y)|^2\dd y\right). 
\end{align}
	
Since $w^+_{\lambda_j}$ has exponential decay, by taking $r_0$ large enough we have that \[M\int_{\Sigma_{\lambda_j}\setminus B_{r_0}(p_j)}|w^+_{\lambda_j}(0,y)|^2\dd y\leq \frac{1}{2}\iint_{R_{\lambda_j}} \vert \nabla w^+_{\lambda_j}\vert^{2}\dd y.
\]
	
Thus, 
\[\iint_{R_{\lambda_j}}\left[|\nabla w^+_{\lambda_j}|^2+m^2|w^+_{\lambda_j}|^2\right]+V_\infty\int_{\Sigma_{\lambda_j}} w^+_{\lambda_j}(0,y)\dd y\hfill \]
\begin{align}
\leq M_{0} \int_{\Sigma_{\lambda_j}\cap B_{r_0}(p_j)}|w^+_{\lambda_j}(0,y)|^2\dd y.
\end{align}
	
The same arguments applied to obtain \eqref{Iden7} yield $w_{\lambda_{j}} \leq 0$ in $R_{\lambda_{j}}$, contradicting the definition of $\nu$. Thus $w_{\nu} = 0$ on $R_{\nu}$  and the symmetry in the $y_1$ direction follows.
	
If $\nu=0$, we also repeat the arguments in the proof of Theorem \ref{t4} to conclude that $u$ is symmetric in the $y_1$ direction.
$\hfill\Box$

\textbf{Acknowledgements:} Aldo H. S. Medeiros received a grant by CNPq - Brasil.

\end{document}